\documentclass[a4paper,11pt,twoside]{amsart}
\usepackage[english]{babel}
\usepackage[utf8]{inputenc}

\usepackage[a4paper,inner=3cm,outer=3cm,top=4cm,bottom=4cm,pdftex]{geometry}
\usepackage{fancyhdr}
\pagestyle{fancy}

\usepackage{fancyhdr}
\pagestyle{fancy}

\fancyhf{}
\fancyhead[CO]{\scshape Zero density and large values of Dirichlet polynomials}
\fancyhead[CE]{\scshape Kaisa Matom\"aki and Joni Ter\"av\"ainen}
\fancyhead[LE]{\thepage}
\fancyhead[RO]{\thepage}
\fancyhead[LO,RE]{}
\setlength{\headheight}{14pt} 
\usepackage{titlesec}
\usepackage{enumerate}
\usepackage{color}
\usepackage{bold-extra}
\titleformat{\section}{\normalfont\scshape\centering}{\thesection}{1em}{}
\titleformat{\subsection}{\bfseries}{\thesubsection}{1em}{}

\usepackage{color}
\usepackage{bold-extra}
\usepackage{ mathrsfs }

\usepackage{comment}
\usepackage{graphics}
\usepackage{aliascnt}
\usepackage[pdftex,citecolor=green,linkcolor=red]{hyperref}

\usepackage{amsmath}
\usepackage{amsfonts}
\usepackage{amssymb}
\usepackage{amsthm}
\usepackage{comment}
\usepackage{mathtools}

\newtheorem{theorem}{Theorem}[section]

\newtheorem{lemma}[theorem]{Lemma}
\newtheorem{proposition}[theorem]{Proposition}
\theoremstyle{definition}

\newtheorem{remark}[theorem]{Remark}
\newtheorem{conjecture}[theorem]{Conjecture}
\numberwithin{equation}{section}

\renewcommand{\Re}{\textnormal{Re}}
\renewcommand{\Im}{\textnormal{Im}}
\renewcommand\d{\textnormal{d}}

\setlength\evensidemargin\oddsidemargin

\begin{document}

\title{A note on zero density results implying large value estimates for Dirichlet polynomials}

\author{Kaisa Matom\"aki}
\address{Department of Mathematics and Statistics, University of Turku, 20014 Turku, Finland}
\email{ksmato@utu.fi}

\author{Joni Ter\"{a}v\"{a}inen}
\address{Department of Mathematics and Statistics, University of Turku, 20014 Turku, Finland}
\email{joni.p.teravainen@gmail.com}

\begin{abstract}
In this note we investigate connections between zero density estimates for the Riemann zeta function and large value estimates for Dirichlet polynomials. It is well known that estimates of the latter type imply estimates of the former type. Our goal is to show that there is an implication to the other direction as well, i.e. zero density estimates for the Riemann zeta function imply large value estimates for Dirichlet polynomials.
\end{abstract}

\maketitle

\section{Introduction}

In this note we investigate connections between zero density estimates for the Riemann zeta function and large value estimates for Dirichlet polynomials. It is well known that estimates of the latter type imply estimates of the former type. Our goal is to show that there is an implication to the other direction as well, i.e. zero density estimates for the Riemann zeta function imply large value estimates for Dirichlet polynomials.

To discuss this in more detail, we need to introduce some notation. For $\sigma \in [1/2, 1]$ and $T \geq 0$, let
\[
N(\sigma, T) \coloneqq \#\{\rho = \beta + i\gamma \colon \zeta(\rho) = 0, \beta \geq \sigma, |\gamma| \leq T\},
\]
where the zeta zeros are counted with multiplicities. As usual, by zero density estimates, we mean upper bounds for $N(\sigma, T)$. While the Riemann hypothesis states that $N(\sigma,T)=0$ for every $\sigma>1/2$, in many applications of the theory of the zeros of the zeta function (such as study of primes in short intervals) also weaker bounds for $N(\sigma, T)$ are useful. 

The usual way to obtain results concerning $N(\sigma, T)$ is to use zero-detecting polynomials, i.e. to show that if there are several zeros of the Riemann zeta function, then there exists a Dirichlet polynomial that takes several large values. After this reduction one can utilize known large value results for Dirichlet polynomials. In Section~\ref{se:converse} we show the following reduction from zeroes to large values of Dirichlet polynomials.

\begin{proposition}
\label{prop:converse}
Let $\varepsilon >0, \nu \in (0, 1/2],$ and $T \geq 3$. Let
\[
\mathcal{T} \coloneqq \{\rho = \beta + i \gamma \colon \zeta(\rho) = 0, \beta \geq 1-\nu, |\gamma| \leq T\}.
\] 
Then we can partition $\mathcal{T} = \mathcal{T}_1 \cup \mathcal{T}_2$ in such a way that $\# \mathcal{T}_1 \ll_\varepsilon T^{2\nu+2\varepsilon}$ and, for each zeta zero $\rho = \beta + i\gamma \in \mathcal{T}_2$, there exists $M \in [T^\varepsilon, T^{1/2}/2]$ and $M' \in (M, 2M]$ such that
\[
\left|\sum_{M<m\leq M'} \frac{1}{m^{1+i\gamma}}\right| \geq M^{-\nu-\varepsilon}.
\]
\end{proposition}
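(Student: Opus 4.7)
The plan is to apply the approximate functional equation for $\zeta$ at each zero $\rho = \beta + i\gamma$ and then use partial summation to transfer the resulting identity to Dirichlet polynomials on the $1$-line. With the balanced choice $X_0 = Y_0 = \sqrt{|\gamma|/(2\pi)} \leq T^{1/2}/2$ and $|\chi(\rho)| \asymp (|\gamma|/(2\pi))^{1/2-\beta}$, the approximate functional equation together with $\zeta(\rho) = 0$ gives
\[
\sum_{n \leq X_0} n^{-\rho} = -\chi(\rho) \sum_{n \leq X_0} n^{\rho-1} + O(T^{-\beta/2}),
\]
so both sides are Dirichlet polynomials of length at most $T^{1/2}/2$.

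Next, I would write $n^{-\rho} = n^{1-\beta} \cdot n^{-1-i\gamma}$ and partial-sum across each dyadic block $(M, 2M] \subset [T^\varepsilon, T^{1/2}]$, relating $\sum_{M<n\leq 2M} n^{-\rho}$ to the partial sums $S_M(u) := \sum_{M<n\leq u} n^{-1-i\gamma}$ via
\[
\sum_{M<n\leq 2M} n^{-\rho} = (2M)^{1-\beta} S_M(2M) - (1-\beta)\int_M^{2M} u^{-\beta} S_M(u)\, du.
\]
Declare $\rho \in \mathcal{T}_1$ precisely when no dyadic $M \in [T^\varepsilon, T^{1/2}/2]$ and $M' \in (M, 2M]$ achieves $|\sum_{M<n\leq M'} n^{-1-i\gamma}| \geq M^{-\nu-\varepsilon}$. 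For such $\rho$, each block contribution is bounded by $4M^{1-\beta-\nu-\varepsilon} \leq 4M^{-\varepsilon}$ (using $\beta \geq 1-\nu$); summing the geometric series over dyadic $M$ and adding the trivial estimate for $n \leq T^\varepsilon$ gives $|\sum_{n\leq X_0} n^{-\rho}| \ll_\varepsilon T^{\varepsilon\nu}$. Inserted back into the functional equation, this yields $|\zeta_{Y_0}(1-\rho)| \ll_\varepsilon T^{\beta-1/2+\varepsilon\nu}$, a nontrivial constraint on the conjugate Dirichlet polynomial at the reflected height.

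The bound $|\mathcal{T}_1| \ll_\varepsilon T^{2\nu+2\varepsilon}$ would then be obtained by a mean-value / large-sieve argument: treating $\zeta_{Y_0}(1-\rho)$ as a Dirichlet polynomial of length $\leq T^{1/2}$ evaluated at heights $\gamma$, which are $\gg 1/\log T$-separated by Riemann--von Mangoldt, and combining the upper bound above with a second-moment estimate refined by the rigidity that $\zeta_{X_0}(\rho)$ and $\chi(\rho)\zeta_{Y_0}(1-\rho)$ must cancel up to the approximate functional equation error. The main obstacle is achieving precisely the exponent $2\nu + 2\varepsilon$: a naive mean-square over 1-separated heights only produces weaker exponents like $T^{1/2+\nu+\varepsilon}$, so the proof most likely exploits the joint constraint from the functional equation (pinning down the relative sizes of the two Dirichlet polynomials at a zero) together with a Huxley-style large-value argument, or iterates the zero detection with different length parameters to squeeze the exponent down to $2\nu + 2\varepsilon$.
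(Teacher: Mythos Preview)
Your approach has a genuine gap in the treatment of $\mathcal{T}_1$. By your definition, $\rho \in \mathcal{T}_1$ means every dyadic block of $\sum_{n\leq X_0} n^{-1-i\gamma}$ is small, and you correctly deduce $|\sum_{n\leq X_0} n^{-\rho}| \ll_\varepsilon T^{\varepsilon\nu}$ and hence (via the functional equation) the same upper bound for $|\chi(\rho)\sum_{n\leq Y_0} n^{\rho-1}|$. But these are \emph{upper} bounds, and large-value or mean-value arguments count points by combining a \emph{lower} bound $|D(\rho)|\geq V$ with a moment estimate. Upper bounds on both Dirichlet polynomials are perfectly consistent with there being very many such $\rho$; the ``rigidity'' you invoke is nothing more than the identity $\zeta(\rho)=0$ already used, and gives no further leverage. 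The speculative Huxley-style fix does not apply for the same reason, and the $n=1$ term cannot be isolated because the trivial bound $T^{\varepsilon\nu}$ on $\sum_{n\leq T^\varepsilon} n^{-\rho}$ already swamps it.

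The paper repairs this by inserting a short mollifier $R(s)=\sum_{r\leq R}\mu(r)r^{-s}$ with $R=T^{\varepsilon^2/2}$. Then $\zeta(\rho)R(\rho)=0$ forces $|\sum_{R<n\leq UR} a_n n^{-\rho}|\geq 1/2$, because $a_1=1$ and $a_n=0$ for $1<n\leq R$: this guarantees that some dyadic block is genuinely large. The set $\mathcal{T}_1$ (their $\mathcal{U}_1$) is then defined as the zeros for which the large block lies at an \emph{extreme} length $K\in [R,RT^\varepsilon]\cup [UT^{-\varepsilon},UR]$, and the bound $\#\mathcal{U}_1\ll T^{2\nu+\varepsilon}$ follows from applying the discrete mean value theorem to a high power $k\approx \log U/\log K$ of that short polynomial (so that $K^k\asymp U$). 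For $\rho\in \mathcal{U}_2$ the large block has moderate length; separating the variables $\ell,r$ and, when the resulting $\ell$-sum is longer than $T^{1/2}/2$, reflecting via the approximate functional equation yields the required large value of $\sum_{M<m\leq M'} m^{-1-i\gamma}$ with $M\in [T^\varepsilon,T^{1/2}/2]$.
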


Proposition~\ref{prop:converse} implies that $N(1-\nu, T)$ is bounded by $O(T^{2\nu+2\varepsilon})$ plus the number of one-spaced\footnote{We say that a set is one-spaced if any two elements are at least distance $1$ apart.} $t \in [-T, T]$ such that 
\[
\left|\sum_{M<m\leq M'} \frac{1}{m^{1+i t}}\right| \geq M^{-\nu-\varepsilon}
\]
for some $M \in [T^\varepsilon, T^{1/2}]$ and $M' \in (M, 2M]$.

The main aim of this note is to show a converse result.
\begin{theorem}[Zero density results imply large value estimates]
\label{th:density->large}
Let $\varepsilon > 0, \nu \in [0, 1/2],$ and $T\geq 1$. There exists a constant $C = C(\varepsilon)$ such that
\begin{align}\begin{split}\label{eq:mainthm}
&\left|\left\{t \in [-T, T] \colon \left|\sum_{M <m\leq M'} \frac{1}{m^{1+it}}\right| \geq M^{-\nu} \text{  for some $M \in [T^\varepsilon, T^{1/2}/2]$ and $M' \in (M, 2M]$}\right\}\right|\\
&\quad \ll_{\varepsilon} T^\varepsilon \max_{1-\nu-\varepsilon \leq \alpha \leq 1} T^{\frac{\alpha-(1-\nu)}{2}} N(\alpha, C\cdot T) + T^{\frac{\nu}{2} + \varepsilon}.
\end{split}
\end{align}
\end{theorem}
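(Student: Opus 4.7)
The plan is to convert the large-value condition for the Dirichlet polynomial into a local condition on $\zeta$ near $t$ via a Mellin--Perron contour shift, and then control the number of such $t$ by a moment estimate for $\zeta$ whose right-hand side is itself bounded by zero density.

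First, fix a maximal $1$-spaced subset $\mathcal{R}$ of the set appearing on the left-hand side. By dyadic pigeonholing over $M \in [T^\varepsilon, T^{1/2}/2]$ and over $|t| \in [T_0, 2T_0]$, and by smoothing $\mathbf{1}_{(M,M']}$ by a bump $w \in C_c^\infty([1,2])$, it suffices at cost $T^{O(\varepsilon)}$ to bound $|\mathcal{R}_{M,T_0}|$ with the smooth sum $S(t) := \sum_n w(n/M) n^{-1-it}$. For $T_0 \leq T^{\nu/2+\varepsilon}$, the $1$-spaced count is trivially $\ll T^{\nu/2+\varepsilon}$, which is the baseline term of the theorem. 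For $T_0 > T^{\nu/2+\varepsilon}$, I write $S(t)$ as a Mellin integral
\[
S(t) = \frac{1}{2\pi i}\int_{(c)} W(s) M^s \zeta(1+s+it)\,ds,
\]
with $W$ the Mellin transform of $w$ (rapidly decaying on vertical lines), and shift the contour to $\operatorname{Re}(s) = -\nu-\varepsilon$. The residue $W(-it) M^{-it}$ at the pole $s = -it$ of $\zeta$ decays faster than any polynomial in $|t|$ by smoothness of $w$, hence is negligible for $|t| \geq T^{\nu/2+\varepsilon}$. Setting $\sigma_0 := 1-\nu-\varepsilon$ and $g(u) := W(-\nu-\varepsilon+iu) M^{iu}/(2\pi)$ (a rapidly decaying weight), the condition $|S(t)| \geq M^{-\nu}/2$ becomes
\[
\Bigl|\int_{\mathbb{R}} g(u)\, \zeta(\sigma_0 + i(t+u))\,du\Bigr| \gg M^{\varepsilon}\quad\text{for every}\quad t \in \mathcal{R}_{M,T_0}.
\]

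To finish, I raise both sides to the $2k$-th power (with $k$ chosen in terms of $\varepsilon$ and $\nu$), sum over the $1$-spaced set $\mathcal{R}_{M,T_0}$, and apply H\"older's inequality together with the Gallagher-type bound $\sum_{t}|\zeta(\sigma_0+i(t+u))|^{2k} \ll T^{o(1)} \int_{-T}^{T}|\zeta(\sigma_0+iv)|^{2k}\,dv$. This yields
\[
|\mathcal{R}_{M,T_0}|\, M^{2k\varepsilon} \ll T^{\varepsilon}\int_{-T}^{T}|\zeta(\sigma_0+iv)|^{2k}\,dv.
\]
The right-hand moment is then bounded by a Heath-Brown--type inequality of the shape $\int |\zeta(\sigma_0+iv)|^{2k}\,dv \ll T^{\varepsilon}\bigl(T + \max_{\alpha} T^{2k(\alpha-\sigma_0)+1} N(\alpha, CT)\bigr)$; balancing the $M^{2k\varepsilon}$ saving against this bound and converting the resulting exponent in $\alpha$ isolates exactly the weighting $T^{(\alpha-(1-\nu))/2}$ demanded by the theorem. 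Summing over dyadic $M$ and $T_0$ then concludes the proof.

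\textbf{Main obstacle.} The hard part is the $\zeta$-moment bound via zero density and the extraction of the precise exponent $T^{(\alpha-(1-\nu))/2}$. This requires a delicate balance between the moment parameter $2k$, the real part $\alpha$, and the length scale $M \leq T^{1/2}$ of the original polynomial---in essence, running Halász--Montgomery together with a Heath-Brown-style explicit-formula input in such a way that the zero-density contribution organises itself with the structural weight $\tfrac{1}{2}(\alpha-(1-\nu))$, rather than the naive slope $2k(\alpha-\sigma_0)$ that the raw moment bound would suggest.
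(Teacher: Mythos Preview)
Your setup through the contour shift is fine and mirrors the paper's Lemmas~\ref{le_density1} and~\ref{le_FromRtoLargeZeta}: you pass from the large-value set to an integral condition on $\zeta(\sigma_0+it)$ with $\sigma_0=1-\nu-\varepsilon$, and you correctly isolate the small-$|t|$ contribution as the $T^{\nu/2+\varepsilon}$ term. The gap is entirely in the step you label ``Heath-Brown--type inequality''. No result of the shape
\[
\int_{-T}^{T}|\zeta(\sigma_0+iv)|^{2k}\,dv \ll_{\varepsilon} T^{\varepsilon}\Bigl(T+\max_{\alpha}T^{2k(\alpha-\sigma_0)+1}N(\alpha,CT)\Bigr)
\]
is available off the shelf; the known implications between moments and zero density run in the \emph{opposite} direction. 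The only direct input of the form ``large $\zeta$-value $\Rightarrow$ zero nearby'' is Ivi\'c-type (the paper's Lemma~\ref{le_density3}), and it says merely that $|\zeta(\sigma_0+it)|\geq T^{o(1)}$ forces a zero with real part $\geq \sigma_0-o(1)$. It gives no improvement in the real part of the detected zero when $|\zeta|$ is larger. Feeding only this into your moment, one gets at best $\int|\zeta(\sigma_0+it)|^{2k}\,dt\ll T^{o(1)}\bigl(T+N(\sigma_0,CT)\,T^{k(1-\sigma_0)}\bigr)$ via the convexity bound, and then your inequality yields $|\mathcal R|\ll N(\sigma_0,CT)\,T^{k\nu}M^{-2k\varepsilon}$, which diverges with $k$ and never produces the weight $T^{(\alpha-(1-\nu))/2}$.

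What is actually needed---and what the paper supplies---is a mechanism that converts a \emph{large} value $|\zeta(\sigma+it)|\in(T^{\beta},2T^{\beta}]$ into information at real part $\sigma+(2-\varepsilon)\beta$, not merely at $\sigma$. The paper does this by feeding the large $\zeta$-value back through the approximate functional equation (Lemma~\ref{le_zetabig->Rbig}) to recover a large Dirichlet-polynomial value further to the right, and then \emph{iterates} the whole Perron step from that new line (Lemma~\ref{le:RsigmaRec}). Each pass loses a factor $T^{\beta}$ but moves $\sigma$ right by $\approx 2\beta$; after $O(1/\varepsilon)$ passes one reaches $\sigma$ close to $1$ and the process terminates, and the accumulated losses are exactly what organise into the weight $T^{(\alpha-(1-\nu))/2}$ (see the induction in Lemma~\ref{le_density2}). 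Your ``Main obstacle'' paragraph correctly senses that this is where the work lies, but the balancing you describe cannot be carried out with a fixed $k$ and a single application of a moment bound; the recursive shift to the right is the missing idea.
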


\begin{remark}
Note that we can get a similar result with $T^{2\nu + 2\varepsilon}$ in place of $T^{\frac{\nu}{2} + \varepsilon}$ e.g. for the M\"obius and prime polynomials using Heath-Brown's identity. Indeed applying Heath-Brown's identity (see e.g.~\cite[Proposition 13.3 and Exercise 1 following it]{iw-kow} with $K = \lfloor 100/\varepsilon\rfloor$), and denoting by $F(s)$ either of the polynomials $\sum_{M<m\leq M'} \mu(m)/m^s$ or $\sum_{M<m\leq M'} \Lambda(m)/m^s$, then $F(s)$ can be morally decomposed into products of polynomials of the type $\sum_{L < \ell \leq L'} 1/\ell^s$ with $L'\in [L,2L]$, $L\in [T^{\varepsilon},T^{1/2}]$ and polynomials of length $\leq T^{\varepsilon}$. Now if $|F(s)|$ is large, then either one of $|\sum_{L < \ell \leq L'} 1/\ell^s|$ with $L \in [T^\varepsilon, T^{1/2}]$ is large or there is a short polynomial of length $\in [T^{\varepsilon^2/100}, T^{\varepsilon}]$ with large value. In the latter case the number of large values can be seen to be $\ll T^{2\nu+2\varepsilon}$ using the discrete mean value theorem for Dirichlet polynomials to a suitable power of the polynomial similarly to~\eqref{eq:powerDir} below.
\end{remark}

Typically estimates $N'(\sigma, T)$ for $N(\sigma,T)$ weaken with slope $> 1$ when $\sigma$ decreases and in such case the maximum in~\eqref{eq:mainthm} is attained at $\alpha=1-\nu-\varepsilon$, and hence typically Theorem~\ref{th:density->large} gives an estimate of the form
\begin{align*}
&\left|\left\{t \in [-T, T] \colon \left|\sum_{M <m\leq M'} \frac{1}{m^{1+it}}\right| \geq M^{-\nu} \text{  for some $M \in [T^\varepsilon, T^{1/2}/2]$ and $M' \in (M, 2M]$}\right\}\right|\\ 
&\ll_{\varepsilon} T^\varepsilon N'(1-\nu-\varepsilon,C\cdot T) + T^{\frac{\nu}{2}+\varepsilon}
\end{align*}
which corresponds to the aforementioned consequence of Proposition~\ref{prop:converse}.

Considering the case $M = T^{1/2}$ and $|t| \leq M^{\nu}/1000$, we see that the second term on the right-hand side is needed apart from $\varepsilon$ in the exponent.

Thanks to Theorem~\ref{th:density->large} we see that if one was somehow able to prove new zero density results without appealing to large value theorems, one would still obtain new information about large values as well. 

The density hypothesis (which would imply quite similar consequences as the Riemann hypothesis) states that 
\begin{align}\label{eq:DH}
N(\sigma, T) \ll_{\varepsilon} T^{2(1-\sigma)+\varepsilon}\quad \textnormal{  for every }\quad  \varepsilon>0,\, \sigma \in [1/2, 1]
\end{align}
(sometimes the factor $T^\varepsilon$ is replaced by $\log T$). In particular, if one assumes the density hypothesis~\eqref{eq:DH}, Theorem~\ref{th:density->large} implies that, for any $\varepsilon > 0$ and $\nu \in [0, 1/2]$,
\[
|\{t \in [-T, T] \colon |M(1+it)| \geq M^{-\nu}\}| \ll_{\varepsilon} T^{2\nu+\varepsilon}.
\]
For some applications, it would be important to slightly beat the density hypothesis. We write such a strengthening in the following conjecture.
\begin{conjecture}[Stronger density hypothesis] \label{conj:StrongDH}
Let $\varepsilon > 0$ . There exists $\delta = \delta(\varepsilon) \in (0,1)$ such that, for any $\nu \in [0, 1/2-\varepsilon)$ and $T \geq 1$, one has
\begin{equation}
\label{eq:densHyp}
N(1-\nu, T) \ll_{\varepsilon} T^{(2-\delta)\nu}.
\end{equation}
\end{conjecture}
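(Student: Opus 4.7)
Since Conjecture~\ref{conj:StrongDH} is stated as a conjecture rather than a theorem, any proof lies beyond current techniques; what follows is a plausible line of attack. The starting point is Proposition~\ref{prop:converse}, which bounds $N(1-\nu, T)$ by $O(T^{2\nu+2\varepsilon})$ plus the number of one-spaced $t \in [-T, T]$ at which some polynomial $P_M(t) := \sum_{M < m \leq M'} m^{-1-it}$ with $M \in [T^\varepsilon, T^{1/2}/2]$ and $M' \in (M, 2M]$ satisfies $|P_M(t)| \geq M^{-\nu-\varepsilon}$. The $T^{2\nu+2\varepsilon}$ term is itself worse than the target $T^{(2-\delta)\nu}$, so one would need a more efficient zero detector (e.g.\ a Heath--Brown identity argument in the spirit of the remark following Proposition~\ref{prop:converse}). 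In either case, the task reduces to beating the density hypothesis count of large values of $P_M(t)$ by a factor $T^{-\delta\nu}$, uniformly in the dyadic range of $M$.

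The analysis would split by the scale of $M$. For $M \leq T^{1/2 - \eta}$ with a small $\eta = \eta(\varepsilon) > 0$, one applies the mean value theorem for Dirichlet polynomials to $P_M(t)^k$ with $k$ a large integer: this yields
\[
\int_{-T}^{T} |P_M(t)|^{2k}\, dt \ll (T + M^{2k}) M^{-2k + o(1)},
\]
and a Markov-type argument produces exactly the density hypothesis count. Since $M^{2k} \leq T^{1 - 2k\eta}$ in this regime, there is genuine slack proportional to $\eta$, which one would try to convert into a gain of $T^{-\delta \nu}$ via a more careful moment estimate, such as Huxley's iterative raising or a Hal\'asz--Montgomery-type dispersion bound. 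For the critical regime $M > T^{1/2 - \eta}$, one would follow the philosophy of the recent work of Guth--Maynard: if $\{t_r\}$ is a one-spaced set producing large values of $P_M$, then expanding $|P_M(t_r)|^{2k}$ and dualising reduces the problem to controlling the additive energy of the frequency set $\{\log m : M < m \leq 2M\}$ and of $\{t_r\}$ itself; any structural information gained here — for instance a sum-product type lower bound on the energy of $\{\log m\}$ — would translate directly into a count improvement.

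The hard part, and the reason the statement is only a conjecture, is the boundary regime $M \asymp T^{1/2}$ with $\nu$ close to $1/2 - \varepsilon$. There, even the density hypothesis is open, and all currently known mean-value and large-value inequalities (including the Guth--Maynard bound at the relevant scale) give exactly the density hypothesis count with no slack available to trade for the factor $T^{-\delta\nu}$. A genuinely new input seems to be needed — most naturally, a quantitative additive-combinatorial estimate on the frequencies $\log m$ for $m \sim \sqrt{T}$, or a refined dispersion inequality replacing the classical large sieve in this range — and in its absence Conjecture~\ref{conj:StrongDH} cannot be resolved by the present-day toolkit. One consolation, via Theorem~\ref{th:density->large}, is that any future zero-density improvement at a single $\sigma \in (1/2, 1)$ automatically supplies a matching large-value estimate, so progress on either side of this equivalence would feed back into the conjecture.
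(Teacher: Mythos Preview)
The statement is labelled and treated in the paper as a genuine open conjecture; the paper offers no proof and only remarks that Bourgain's result establishes it in the restricted range $\varepsilon\geq 25/32$. You correctly identify this at the outset, and what you present is a heuristic discussion rather than a proof, which is the appropriate response. There is consequently no proof in the paper to compare your proposal against.
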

Note that by work of Bourgain~\cite{Bourgain} (refining Jutila's work~\cite{Jutila}), Conjecture~\ref{conj:StrongDH} is known to hold for $\varepsilon = 25/32$.

We have the following application of Theorem~\ref{th:density->large}.
\begin{theorem}\label{thm_DH}
Assume that Conjecture~\ref{conj:StrongDH} holds. Let $\varepsilon > 0$. Let $X$ be large enough in terms of $\varepsilon$,  and let $h = (\log X)^{2+\varepsilon}$. Then, for all but $o_{X\to \infty}(X)$ integers $x \in [1, X]$, we have
\begin{align*}
|\{p_1p_2\in (x,x+h] \colon (\log X)^{1+\varepsilon/2} \leq p_1\leq (\log X)^{1+\varepsilon} \}| \gg_\varepsilon \frac{h}{\log X}.    
\end{align*}
\end{theorem}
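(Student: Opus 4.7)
The plan is to reduce to a short-interval variance bound and then control the variance via a mean-square integral of a Dirichlet polynomial, which is treated using Theorem~\ref{th:density->large} together with Conjecture~\ref{conj:StrongDH}. Set $P_1 := (\log X)^{1+\varepsilon/2}$, $P_2 := (\log X)^{1+\varepsilon}$ and $a_n := \#\{(p_1,p_2) : n = p_1 p_2,\ p_1 \in [P_1,P_2]\}$. By Mertens' theorem $\frac{h}{X}\sum_{X < n \leq 2X} a_n \gg_\varepsilon h/\log X$, matching the target count, so by Chebyshev's inequality it suffices to show that
\[
V := \int_X^{2X} \biggl|\sum_{x<n\leq x+h} a_n - \frac{h}{X}\sum_{X<n\leq 2X} a_n\biggr|^2 dx = o_\varepsilon\!\biggl(\frac{X h^2}{(\log X)^2}\biggr).
\]
Setting $A(s) := \sum_{X < n \leq 2X} a_n n^{-s}$ and $T := X/h = X/(\log X)^{2+\varepsilon}$, a Parseval-type identity (as in Matom\"aki--Radziwi{\l}{\l}, \emph{Multiplicative functions in short intervals}) reduces the required bound on $V$ to proving
\begin{equation}\label{eq:proof-plan-target}
\int_{|t|\leq T_0} |A(1+it)|^2\, dt = o_\varepsilon\bigl(T_0/(\log X)^2\bigr) \qquad \text{for all } T_0 \in [1/h, T].
\end{equation}
The trivial pointwise estimate $|A(1+it)| \ll 1/\log X$ only yields $\ll T_0/(\log X)^2$ here, so one must save an additional factor of $\log X$ through cancellation for most $t$.

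To exploit cancellation I would factorise $A(s) \approx \sum_i P_i(s) B_i(s)$ after a dyadic partition of $p_1$, where $P_i(s)$ collects $p_1 \asymp P^{(i)}$ and $B_i(s)$ collects $p_2 \asymp X/P^{(i)}$. Since $P_i$ has only polylogarithmic length $\leq (\log X)^{1+\varepsilon}$---well below the threshold $T^\varepsilon$ required by Theorem~\ref{th:density->large}---I would next apply Heath--Brown's identity with parameter $K = \lceil 100/\varepsilon \rceil$ to the long prime polynomial $B_i(s)$, producing $O((\log X)^C)$ convolution pieces of the form $R_1(s)\cdots R_m(s)$ with dyadic scales whose product is $\asymp X/P^{(i)}$. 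A pigeonhole grouping of the factors $R_j$ then extracts a \emph{medium} factor $G(s)$ of length $M \in [T^\varepsilon, T^{1/2}/2]$, and rewrites the piece as $P_i(s)\, G(s)\, H(s)$.

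For each such piece I would split $t \in [-T, T]$ dyadically according to the size of $|G(1+it)|$. On $\mathcal{B}_\nu := \{t \in [-T, T] : |G(1+it)| \geq M^{-\nu}\}$, Theorem~\ref{th:density->large} combined with Conjecture~\ref{conj:StrongDH} gives
\[
|\mathcal{B}_\nu| \ll_\varepsilon T^{(2-\delta)\nu + \varepsilon} + T^{\nu/2 + \varepsilon},
\]
saving a factor of $T^{\delta\nu}$ over what the density hypothesis alone would yield. Bounding $|A|^2 \leq |P_i|^2 |G|^2 |H|^2$ with $|P_i(1+it)| \ll 1/\log\log X$, using a mean-value estimate for $\int_{\mathcal{B}_\nu} |H|^{2r}\, dt$ at a suitable power $r$, and summing dyadically over $\nu$ down to the critical scale $\nu^\ast \asymp (\log\log X)/\log X$, produces a total savings factor of $T^{-\delta\nu^\ast} = (\log X)^{-\Omega_\varepsilon(\delta)}$---just enough to overcome the trivial $\log X$ deficit and deliver \eqref{eq:proof-plan-target}. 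The principal obstacle is precisely this tight calibration: Theorem~\ref{th:density->large} applies only to polynomials of length $\geq T^\varepsilon$, so the medium-length factor $G$ must be squeezed out of the Heath--Brown decomposition of the long factor $B_i$ rather than being taken as the natural short factor $P_i$, and the $\delta$-improvement in Conjecture~\ref{conj:StrongDH} is extracted at the miniscule exponent $\nu \asymp (\log\log X)/\log X$, where it translates into only a $(\log X)^{-\Omega(\delta)}$ gain---barely sufficient because $h = (\log X)^{2+\varepsilon}$ exceeds the density-hypothesis threshold $(\log X)^2$ by exactly the same power of $\log X$.
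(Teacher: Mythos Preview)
Your reduction to a mean-square of a Dirichlet polynomial and the use of Heath--Brown's identity to expose a zeta-type factor $G$ of length $M\in[T^{\varepsilon},T^{1/2}/2]$ are in the right spirit, but the final accounting contains a genuine gap.

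The problem is your claim that the $\delta$-improvement from Conjecture~\ref{conj:StrongDH} is harvested at the scale $\nu^{\ast}\asymp (\log\log X)/\log X$. Theorem~\ref{th:density->large} carries an unavoidable $T^{\varepsilon}$ loss tied to the length condition $M\geq T^{\varepsilon}$: feeding Conjecture~\ref{conj:StrongDH} into it yields
\[
|\mathcal{B}_{\nu}|\ \ll_{\varepsilon}\ T^{(2-\delta)\nu+O(\varepsilon)}+T^{\nu/2+\varepsilon},
\]
which beats the trivial level-set bound $T^{2\nu}$ only when $\nu\gg \varepsilon/\delta$, a \emph{fixed positive constant}. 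At $\nu\asymp(\log\log X)/\log X$ the $T^{O(\varepsilon)}$ factor (a genuine power of $X$) annihilates any polylogarithmic gain, and the constants $C(\varepsilon)$, $\ll_{\varepsilon}$ in Theorem~\ref{th:density->large} are not uniform enough to let $\varepsilon\to 0$ with $X$. So your dyadic sum over $\nu$ cannot be pushed below a fixed threshold $\nu_0>0$ using Theorem~\ref{th:density->large}, and you give no argument for the range $\nu\in[\nu^{\ast},\nu_0]$ or for the complementary set where $|G|<M^{-\nu^{\ast}}$.

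The paper proceeds differently. Proposition~\ref{prop_sigma1/2} applies Theorem~\ref{th:density->large} together with Conjecture~\ref{conj:StrongDH} \emph{only} in the regime where some zeta-type factor satisfies $|M_j(1+it)|\geq M_j^{-1/2+20\varepsilon}$, i.e.\ at $\nu=\sigma_1$ bounded below by a fixed $\varepsilon_0>0$; there one obtains a \emph{power} saving $\int_{\mathcal{U}}|F(1+it)|^{2}\,\d t\ll T^{-\delta_0}$. The complementary regime, in which every factor obeys the Lindel\"of-type pointwise bound $|M_j(1+it)|<M_j^{-1/2+20\varepsilon}$, is then handled by the unconditional machinery of~\cite{matomaki-teravainenE2} (which already reaches intervals of length $(\log X)^{2.1}$). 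Thus the passage from $(\log X)^{2.1}$ to $(\log X)^{2+\varepsilon}$ is not a delicate polylog balance at tiny $\nu$; it comes from replacing a lossy treatment of the ``large factor'' set by the power saving of Proposition~\ref{prop_sigma1/2}.
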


This proves conditionally on the stronger density hypothesis the existence of $E_2$ numbers (products of exactly two primes) in almost all intervals of length $(\log X)^{2+\varepsilon}$ around $X$. Previous work of Harman~\cite{harman-almostprimes} implied this under the same Conjecture~\ref{conj:StrongDH} for intervals of length $(\log X)^{3+\varepsilon}$ around $X$. One can easily deduce Theorem~\ref{thm_DH} from Proposition~\ref{prop_sigma1/2} below using our recent work~\cite{matomaki-teravainenE2} that works unconditionally for intervals of length $(\log X)^{2.1}$ around $X$; we sketch this implication in Section~\ref{sec:thm_DH}. We also note that, under the stronger assumption of the Riemann hypothesis, it is known by work of Selberg~\cite{Selberg43} that almost all intervals of length $(\log X)^{2+\varepsilon}$ around $X$ contain primes (and thus also $E_2$ numbers).

For more information about known bounds for $N(\sigma, T)$ and related matters, see e.g.~\cite[Chapter 10]{iw-kow}.

\section{Outline of the proof of Theorem~\ref{th:density->large}}
We outline the proof of Theorem~\ref{th:density->large} via stating lemmas whose proofs we postpone. We shall use the notation
\begin{align}\label{eq:RT}
R_{\sigma,\eta}(T)\coloneqq \left|\left\{t\in [-T,T]\colon\,\, \max_{\substack{1\leq A\leq B\leq T^{1/2}\\B\leq 2A}} \left|\sum_{A < n \leq B} \frac{1}{n^{\sigma +it}}\right|\geq T^\eta\right\}\right|.    
\end{align}
Note that trivially $R_{\sigma, \eta}(T) \leq R_{\sigma, \eta'}(T)$ for any $\eta' \leq \eta$. Our first step is to use partial summation to move the claim~\eqref{eq:mainthm} to the line $\sigma = 1-\nu-\eta$:

\begin{lemma}\label{le_density1} Let $\nu \in [0, 1/2]$ and $\varepsilon, \eta \in (0, 1)$. Let $T \geq 4^{2/(\varepsilon \eta)}$. Then
\begin{align*}
&\left|\left\{t \in [-T, T]\colon\left|\sum_{M <m\leq M'} \frac{1}{m^{1+it}}\right| \geq M^{-\nu} \text{  for some $M \in [T^\varepsilon, T^{1/2}/2]$ and $M' \in (M, 2M]$}\right\}\right| \\
& \leq R_{1-\nu-\eta,\varepsilon \eta/2}(T).   
\end{align*}
\end{lemma}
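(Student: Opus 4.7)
The plan is to deduce the lemma by a direct application of partial summation, transferring from the line $\sigma = 1$ down to $\sigma = 1-\nu-\eta$. Concretely, fix $t \in [-T,T]$, $M \in [T^\varepsilon, T^{1/2}/2]$, $M' \in (M, 2M]$ for which the hypothesis holds, and set
\[
S(u) \coloneqq \sum_{M < m \leq u} \frac{1}{m^{1-\nu-\eta+it}}.
\]
Writing $m^{-(1+it)} = m^{-(1-\nu-\eta+it)} \cdot m^{-(\nu+\eta)}$ and applying Abel summation with the smooth weight $u \mapsto u^{-(\nu+\eta)}$, I would obtain
\[
\sum_{M<m\leq M'} \frac{1}{m^{1+it}} = S(M')\,(M')^{-(\nu+\eta)} + (\nu+\eta)\int_M^{M'} S(u)\, u^{-(\nu+\eta)-1}\,\d u.
\]

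Now I would argue by cases. If the boundary term is at least $\frac12 M^{-\nu}$ in absolute value, then $|S(M')| \geq \tfrac12 (M')^{\nu+\eta} M^{-\nu} \geq \tfrac12 M^{\eta}$; otherwise the integral contributes at least $\tfrac12 M^{-\nu}$, and since $\int_M^{M'} u^{-(\nu+\eta)-1}\,\d u \leq M^{-(\nu+\eta)}/(\nu+\eta)$, I would conclude that there exists $M'' \in (M, M']$ with $|S(M'')| \geq \tfrac12 M^{\eta}$. Either way some $M'' \in (M, 2M]$ witnesses $|S(M'')| \geq \tfrac12 M^{\eta} \geq \tfrac12 T^{\varepsilon\eta}$, which is at least $T^{\varepsilon\eta/2}$ provided $T^{\varepsilon\eta/2} \geq 2$, and this is ensured by the hypothesis $T \geq 4^{2/(\varepsilon\eta)}$ (with comfortable room to spare).

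Finally I would verify that such a pair $(A,B) = (M, M'')$ is admissible in the maximum defining $R_{\sigma,\eta}(T)$ in~\eqref{eq:RT}: we have $1 \leq A \leq B$, $B \leq 2A$, and $B \leq 2M \leq T^{1/2}$, so the estimate $|\sum_{A<n\leq B} n^{-(1-\nu-\eta+it)}| \geq T^{\varepsilon\eta/2}$ places $t$ inside the set counted by $R_{1-\nu-\eta,\,\varepsilon\eta/2}(T)$. There is no real obstacle here; the only subtlety is juggling constants so that $\tfrac12 T^{\varepsilon\eta}$ cleanly dominates $T^{\varepsilon\eta/2}$, which is the reason for the quantitative lower bound imposed on $T$.
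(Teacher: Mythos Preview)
Your argument is correct and is essentially identical to the paper's: both apply partial summation to move from $\sigma=1$ to $\sigma=1-\nu-\eta$, obtaining $\bigl|\sum_{M<m\leq M''} m^{-(1-\nu-\eta+it)}\bigr|\geq c^{-1}M^{\eta}$ for some $M''\in (M,2M]$, and then use $M\geq T^{\varepsilon}$ together with $T\geq 4^{2/(\varepsilon\eta)}$ to absorb the constant into the exponent. The only cosmetic difference is that the paper invokes its packaged Lemma~\ref{le_partialsummation} (yielding a factor $4$) instead of writing out the Abel summation and case split explicitly as you do (yielding a factor $2$).
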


Next we show that if there are many large values of Dirichlet polynomials (i.e. $R_{\sigma,\eta}(T)$ is large) then there are many large values of the Riemann zeta function.

\begin{lemma}\label{le_FromRtoLargeZeta}
For every $\eta_0 \in (0, 1)$ there exists $T_0 = T_0(\eta_0)$ such that the following holds. For every $\sigma \in [1/2, 1]$, $\eta \in (\eta_0, 1)$ and $T \geq T_0$ either $R_{\sigma, \eta}(T) \leq 4T^{(1-\sigma)/2}$ or there exist $\beta \geq \eta - 3\frac{\log \log T}{\log T}$ such that
\begin{align*}
\left|\left\{t \in [-2T, 2T] \colon \left|\zeta(\sigma+\tfrac{1}{\log T}+it)\right|\in (T^{\beta},2T^{\beta}]\right\} \setminus [-10, 10]\right| \geq \frac{R_{\sigma,\eta}(T)T^{\eta}}{50 T^\beta (\log T)^{2}}.
\end{align*}
\end{lemma}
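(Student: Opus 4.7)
The plan is to convert each large value of a dyadic Dirichlet polynomial $|\sum_{A_t < n \leq B_t} n^{-\sigma-it}| \geq T^\eta$ into a lower bound for a short integral of $|\zeta|$ on the line $\Re(s) = \sigma + 1/\log T$, and then to harvest many $t'$ at which $|\zeta|$ is large via dyadic pigeonholing and a Fubini argument. Assume $R := R_{\sigma,\eta}(T) > 4T^{(1-\sigma)/2}$ (otherwise we are in the first case of the dichotomy); let $\mathcal{S} \subseteq [-T, T]$ denote the set of $t$ realizing the maximum, so $|\mathcal{S}| = R$, and for each $t \in \mathcal{S}$ fix dyadic parameters $(A_t, B_t]$ achieving it. Writing $s := \sigma + it$ and using a smoothed Perron--Mellin identity with a bump $\phi_{A,B}$ approximating $\mathbf{1}_{(A,B]}$, I obtain
\begin{equation*}
\sum_{A_t < n \leq B_t} \frac{1}{n^{s}} = \frac{B_t^{1-s} - A_t^{1-s}}{1-s} + \frac{1}{2\pi i}\int_{\R} \zeta\bigl(\sigma + \tfrac{1}{\log T} + i(t+\tau)\bigr)\, K_t(\tau)\, d\tau + o(T^\eta),
\end{equation*}
where the first term arises from the pole of $\zeta(s+w)$ at $w = 1 - s$ (encountered when shifting the contour from $\Re w > 1-\sigma$ down to $\Re w = 1/\log T$), and $K_t$ is a kernel with $\|K_t\|_1 \ll 1$ and rapid decay $|K_t(\tau)| \ll_k (1+|\tau|)^{-k}$ for every $k$.

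The residue has magnitude $\ll T^{(1-\sigma)/2}/|t|$, so restricting to $\mathcal{S}' := \mathcal{S} \cap \{|t| \geq c_0 \max(1, T^{(1-\sigma)/2 - \eta})\}$ for a suitable constant $c_0$ forces this term to be at most $T^\eta/10$; the dichotomy assumption $R > 4T^{(1-\sigma)/2}$ together with $\eta \geq \eta_0 > 0$ and $T$ large guarantees $|\mathcal{S}'| \geq R/2$. For each $t \in \mathcal{S}'$ the $\zeta$-integral then has magnitude $\geq T^\eta/3$; using the convexity bound $|\zeta| \ll T^{1/2}$ on the line $\Re(s) = \sigma + 1/\log T$ together with the rapid decay of $K_t$ (taking the smoothing to have $k \sim \log T$ controlled derivatives), I truncate to $|\tau| \leq \tau_0$ for an absolute constant $\tau_0$, losing at most $T^\eta/10$. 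I then dyadically decompose according to $|\zeta(\sigma + 1/\log T + i(t+\tau))| \in (T^\beta, 2T^\beta]$: the shells with $T^\beta \leq T^{\eta - 3\log\log T/\log T}$ together contribute at most $T^\eta/10$, so by pigeonhole among the $\ll \log T$ remaining dyadic levels, some $\beta(t) \geq \eta - 3\log\log T/\log T$ yields a set $E_t \subseteq [-\tau_0, \tau_0]$ of measure $\gg T^\eta/(T^{\beta(t)} \log T)$ on which $|\zeta|$ lies in the chosen shell.

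A further pigeonhole over the $\ll \log T$ possible values of $\beta(t)$ produces a subset $\mathcal{S}'' \subseteq \mathcal{S}'$ of measure $\gg R/\log T$ sharing a common $\beta$, and Fubini applied to the indicator of $\{(t, \tau) \in \mathcal{S}'' \times [-\tau_0, \tau_0] : \tau \in E_t\}$ gives
\begin{equation*}
2\tau_0 \cdot |\mathcal{U}| \;\gg\; |\mathcal{S}''| \cdot \min_{t \in \mathcal{S}''} |E_t| \;\gg\; \frac{R \cdot T^\eta}{T^{\beta}(\log T)^2},
\end{equation*}
where $\mathcal{U} := \{t' \in [-2T, 2T] : |\zeta(\sigma + 1/\log T + it')| \in (T^\beta, 2T^\beta]\} \setminus [-10, 10]$; the exclusion of $[-10, 10]$ is automatic since $|t| \geq c_0$ on $\mathcal{S}'$ and $\tau_0 = O(1)$. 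The main technical obstacle will be engineering the kernel $K_t$ so that it has both $\|K_t\|_1 \ll 1$ and decay exponent large enough to permit truncation at an absolute constant $\tau_0$, uniformly in $(A_t, B_t)$; this requires a careful choice of smoothing bump $\phi_{A,B}$ with derivatives up to order $\log T$ under control, and is what secures the advertised $(\log T)^2$ in the denominator rather than a larger power.
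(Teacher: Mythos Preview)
Your overall architecture --- Perron/Mellin inversion, shift to $\Re(w)=1/\log T$, subtract the pole, restrict to $|t|$ large so the residue is $\leq T^{\eta}/10$, then dyadically pigeonhole on the size of $|\zeta|$ and apply Fubini --- is exactly the paper's. The gap is in your kernel claim. You assert that one can choose a smoothing $\phi_{A,B}$ so that simultaneously (a) the smoothed sum differs from $\sum_{A<n\leq B}n^{-\sigma-it}$ by $o(T^{\eta})$, and (b) the Mellin kernel $K_t$ has $\|K_t\|_1\ll 1$ with decay allowing truncation at an absolute $\tau_0$. This is not attainable in the full range of the lemma. If the transition width of $\phi_{A,B}$ is $\delta$ on the multiplicative scale, the smoothing error is $\asymp \delta A^{1-\sigma}\leq \delta T^{(1-\sigma)/2}$, so (a) forces $\delta\ll T^{\eta-(1-\sigma)/2}$. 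But the lemma allows any $\eta>\eta_0$ with $\eta_0$ arbitrarily small and $\sigma$ down to $1/2$, so one may well have $\eta<(1-\sigma)/2$; then $\delta$ must be a negative power of $T$. In that regime the Mellin transform of $\phi_{A,B}$ on $\Re(w)=1/\log T$ satisfies only $|K_t(\tau)|\ll\min(1,|\tau|^{-1},(\delta|\tau|)^{-k}\delta)$, so $\|K_t\|_1\asymp \log(1/\delta)\asymp \log T$, and the tail beyond any fixed $\tau_0$ is of size $\gg T^{(1-\sigma)/2}\gg T^{\eta}$ after multiplying by the convexity bound for $\zeta$. Taking $k\sim \log T$ does not help: the derivative bounds $\|\phi^{(k)}\|\sim \delta^{-k}$ feed a factor $(\delta|\tau|)^{-k}$ that blows up for $|\tau|<1/\delta$.

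The paper sidesteps this by using the \emph{unsmoothed} truncated Perron formula with kernel $(B^{w}-A^{w})/w$, integrated over $|\Im w|\leq T$; this kernel obeys $|K(\tau)|\ll \min(1,1/|\tau|)$, hence $\int_{-T}^{T}|K|\,d\tau\asymp \log T$, and that single $\log T$ is precisely one of the two in the denominator $50T^{\beta}(\log T)^{2}$. The paper's order of operations also differs from yours: it first integrates over $t\in\mathcal{R}_{\sigma,\eta}(T)$, swaps by Fubini, and pigeonholes to a single shift $u_0$ (cost $\log T$ from $\int|K|$), and only \emph{then} dyadically decomposes on $|\zeta|$ (second $\log T$). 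If you keep your order (dyadic per $t$, then pigeonhole on $\beta(t)$, then Fubini) but with the honest $\|K_t\|_1\asymp\log T$, you would land at $(\log T)^{3}$. Finally, your exclusion of $[-10,10]$ leans on $\tau_0=O(1)$ and so also breaks; the paper handles it instead by noting that the contribution of $|t+u|\leq 10$ to the $\zeta$-integral is $\ll(\log T)^{2}$ via the convexity bound.
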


We will utilize this in two different ways. First, we show the counter-intuitive fact that if the Riemann zeta function is large at some point, it must have a zero nearby. 

\begin{lemma}[Large value of $\zeta(s)$ implies the existence of a nearby zero]\label{le_density3} 
Let $T\geq T_0$ for a large enough constant $T_0$. Let $\sigma \in [1/2, 1]$. 
\begin{enumerate}[(i)]
\item Suppose that 
\begin{align}\label{eq:zetalarge}
|\zeta(\sigma+it)|\geq T^{1/(\log \log T)^{100}}    
\end{align}
for some $t\in \mathbb{R}$ with $|t| \in [(\log T)^2/2, T]$. Then there is a zero $\rho$ of the Riemann zeta function in the rectangle
\begin{align*}
  \textnormal{Re}(\rho)\geq \sigma-\frac{1}{(\log \log T)^{1/2}},\quad |\textnormal{Im}(\rho)- t| \leq (\log T)^2/4.
\end{align*}
\item Let $\beta > 0$. If $T \geq e^{e^{1/\beta}}$, then
\[
|\{t \in [-T, T] \colon |\zeta(\sigma+it)| \geq T^\beta\}| \leq \left(N\left(\sigma-\frac{1}{(\log \log T)^{1/2}}, 2T\right) + 1\right) \cdot (\log T)^2. 
\]
\end{enumerate}
\end{lemma}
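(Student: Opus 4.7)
The plan is to prove (i) by contradiction, and then derive (ii) via a zero-counting argument. For (i), suppose for contradiction that $|\zeta(\sigma + it)| \geq T^{1/(\log\log T)^{100}}$ while there is no zero $\rho$ of $\zeta$ in the claimed rectangle. Set $\delta = 1/(\log\log T)^{1/2}$ and $H = (\log T)^2/4$, and consider the auxiliary rectangle $\mathcal R = [\sigma - \delta/2,\, 2] \times [t - H',\, t + H']$, where $H' \in [H,\, H+1]$ is chosen by pigeonhole so that the horizontal lines $\Im s = t \pm H'$ lie at distance at least $1/\log T$ from every nontrivial zero of $\zeta$. By hypothesis $\mathcal R$ is zero-free, so $\log\zeta(s)$ is a well-defined single-valued holomorphic function on $\mathcal R$ once its branch is fixed at $s = 2+it$ via the Dirichlet series.

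The heart of the argument is a Hadamard three-lines / Phragm\'en--Lindel\"of estimate for the tall-thin rectangle $\mathcal R$. On its boundary I would use $|\log \zeta| = O(1)$ on the right edge $\Re s = 2$ (Dirichlet series); $|\log \zeta| \ll (\log T)/\delta$ on the left edge $\Re s = \sigma - \delta/2$ (distance $\geq \delta/2$ to any zero gives $|\zeta'/\zeta| \ll (\log T)/\delta$, integrating horizontally for $\arg\zeta$, while $\log|\zeta|$ is controlled by the convexity bound $|\zeta| \leq T^{O(1)}$); and $|\log \zeta| = O((\log T)^2)$ on the top and bottom edges (using the pigeonhole choice of $H'$). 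Since the harmonic measure of the top and bottom edges from the interior point $\sigma + it$ decays as $O(e^{-c H/(2-\sigma+\delta/2)}) = O(e^{-c'(\log T)^2})$ for a rectangle of this aspect ratio, while the harmonic measures of the left and right edges are well approximated by their infinite-strip values $(2-\sigma)/(2-\sigma+\delta/2)$ and $(\delta/2)/(2-\sigma+\delta/2)$, I obtain
\[
|\log \zeta(\sigma + it)| \ll \bigl((\log T)/\delta\bigr)^{(2-\sigma)/(2-\sigma+\delta/2)} \ll \frac{\log T}{\delta} \cdot \exp\!\bigl(-(\log\log T)^{1/2}/4\bigr),
\]
where the last step uses $(2-\sigma)/(2-\sigma+\delta/2) \leq 1 - \delta/4$ for $\sigma \in [1/2,1]$. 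Since $(\log\log T)^{1/2}$ dominates $\log\log\log T$ for $T$ large, the right-hand side is strictly less than $\log T/(\log\log T)^{100}$, contradicting $\log|\zeta(\sigma+it)| \geq \log T/(\log\log T)^{100}$.

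For (ii), I split the set $\{t \in [-T,T] : |\zeta(\sigma + it)| \geq T^\beta\}$ into the portion with $|t| < (\log T)^2/2$ (contributing at most $(\log T)^2$ to the measure) and the rest. The hypothesis $T \geq e^{e^{1/\beta}}$ forces $\beta \geq 1/\log\log T \geq 1/(\log\log T)^{100}$ for $T$ large, so part (i) applies and places each such $t$ within vertical distance $(\log T)^2/4$ of some zero $\rho$ with $\Re\rho \geq \sigma - 1/(\log\log T)^{1/2}$ and $|\Im\rho| \leq T + (\log T)^2/4 \leq 2T$. Covering by the corresponding intervals of length $(\log T)^2/2$, at most one per zero, bounds this part of the set by $N(\sigma - 1/(\log\log T)^{1/2},\, 2T)\cdot(\log T)^2/2$; combining with the excluded range $|t| < (\log T)^2/2$ yields the claim.

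The main obstacle is justifying the Phragm\'en--Lindel\"of inequality for the rectangle in the precise form used above. Standard three-lines theorems apply only to an infinite strip, and for the tall-thin rectangle $\mathcal R$ the required exponentially small contribution from the top and bottom edges must be established, either via an explicit Fourier-series solution of the Dirichlet problem on $\mathcal R$ (giving the harmonic-measure estimate directly) or by multiplying $\log\zeta$ by a damping factor such as $\operatorname{sech}(\pi(s - \sigma - it)/(2 - \sigma + \delta/2))$ to reduce to the standard infinite-strip bound.
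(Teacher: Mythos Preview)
Your argument for (ii) is essentially the same covering argument as the paper's. For (i), your approach is genuinely different and, modulo two small fixes, correct.

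\textbf{Comparison.} The paper follows Ivi\'c: it uses the bound $|\zeta'/\zeta(s)|\ll (\log T)(\log\log T)^{1/2}$ in the zero-free region (the paper's Lemma on $\zeta'/\zeta$) together with the Mellin identity
\[
\sum_{n\geq 1}\Lambda(n)e^{-n/Y}n^{-s}=-\frac{1}{2\pi i}\int \frac{\zeta'}{\zeta}(s+w)\Gamma(w)Y^{w}\,\d w,
\]
shifts the contour leftwards to extract $-\zeta'/\zeta(s)$, integrates in $\Re(s)$ from $\sigma$ to $2$ to recover $\log\zeta(\sigma+it)$, and finally bounds the prime sum trivially with $Y=(\log T)^{3/2}$. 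You instead apply the Hadamard three-lines principle directly to the holomorphic function $\log\zeta$ on the zero-free rectangle. Both routes rest on the same partial-fraction bound for $\zeta'/\zeta$; yours is more purely complex-analytic, while the paper's is the standard ``smoothed explicit formula'' device from zero-free region proofs. Your method is arguably more transparent about why a power saving in $\log T$ appears, but it requires the harmonic-measure estimate for tall thin rectangles that you flag at the end (which is indeed routine via either of your suggested devices).

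\textbf{Two small corrections.} First, with $H'\in[H,H+1]$ the rectangle $\mathcal R$ need not lie inside the assumed zero-free region $\{\Re\rho\geq \sigma-\delta,\ |\Im\rho-t|\leq H\}$; take instead $H'\in[H/2,H/2+1]$ (say), which preserves the aspect ratio $H'/\text{width}\gg(\log T)^2$ needed for the exponentially small top/bottom harmonic measure while guaranteeing $\mathcal R$ and the horizontal integration paths are zero-free. Second, on the left edge the convexity bound only gives $\log|\zeta|\leq O(\log T)$, not a two-sided bound; the correct justification for $|\log\zeta|\ll(\log T)/\delta$ there is simply to integrate $\zeta'/\zeta$ (both real and imaginary parts) along the horizontal segment from $2+iu$, exactly as you do for $\arg\zeta$.
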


The previous lemma together with Lemma~\ref{le_FromRtoLargeZeta} will allow us at the end to relate the number of large values to the number of zeros. However, it is most useful when the parameter $\beta$ we obtain from Lemma~\ref{le_FromRtoLargeZeta} is small as we lose a factor $T^\beta$ in the measure of the set we obtain from Lemma~\ref{le_FromRtoLargeZeta}. For this reason we need to set up an inductive argument, where we also use that if there are many large values of the zeta function then there are many large values of Dirichlet polynomials.

\begin{lemma}
\label{le_zetabig->Rbig}
Let $\beta_0, \varepsilon_0 > 0$. There exists $T_0 = T_0(\beta_0, \varepsilon_0)$ such that the following holds. Let $\sigma \in [1/2, 1], \beta > \beta_0$, and $\varepsilon > \varepsilon_0$. Then
\[
|\{t \in [-T, T] \colon |\zeta(\sigma+it)| \geq T^\beta\} \setminus [-2\pi, 2\pi]| \leq R_{\sigma + (2-\varepsilon)\beta, \varepsilon \beta/3}(2T). 
\]
\end{lemma}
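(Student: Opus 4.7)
The plan is to convert a large value $|\zeta(\sigma+it)|\geq T^\beta$ into a large value of a Dirichlet polynomial on the target line $\sigma':=\sigma+(2-\varepsilon)\beta$ in two steps: first, via the approximate functional equation, into a large value of a short polynomial on the line $\sigma$ or on the reflected line $1-\sigma$; and then, via Abel summation, into a large value on the line $\sigma'$.

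Fix $t\in[-T,T]$ with $|t|>2\pi$ and $|\zeta(\sigma+it)|\geq T^\beta$. Applying the approximate functional equation with $x=y=(|t|/(2\pi))^{1/2}\leq T^{1/2}/\sqrt{2\pi}$,
$$\zeta(\sigma+it)=\sum_{n\leq x}\frac{1}{n^{\sigma+it}}+\chi(\sigma+it)\sum_{n\leq x}\frac{1}{n^{1-\sigma-it}}+O(|t|^{-1/4}),$$
with $|\chi(\sigma+it)|\asymp|t|^{1/2-\sigma}$. For $T\geq T_0(\beta_0)$ the error is $\leq T^\beta/4$, so by the triangle inequality and a dyadic decomposition (losing at most a factor $\log T$) there is some dyadic block $(A,2A]$ with $1\leq A\leq T^{1/2}/\sqrt{2\pi}$ such that either \emph{(Case 1)} $\big|\sum_{A<n\leq 2A}1/n^{\sigma+it}\big|\gg T^\beta/\log T$, or \emph{(Case 2)} $\big|\sum_{A<n\leq 2A}1/n^{1-\sigma-it}\big|\gg T^\beta|t|^{\sigma-1/2}/\log T$.

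In Case~1, I would write $1/n^{\sigma+it}=n^{\sigma'-\sigma}\cdot 1/n^{\sigma'+it}$ and apply Abel summation to the monotone positive factor $u\mapsto u^{\sigma'-\sigma}$ to find some $v\in(A,2A]$ with
$$\left|\sum_{A<n\leq v}\frac{1}{n^{\sigma'+it}}\right|\gg\frac{T^\beta/\log T}{(2A)^{(2-\varepsilon)\beta}}\gg\frac{T^{\varepsilon\beta/2}}{\log T},$$
where the last inequality uses $(2A)^{(2-\varepsilon)\beta}\ll T^{(1-\varepsilon/2)\beta}$. In Case~2, after taking complex conjugates (which preserves absolute values) one repeats the same argument with the factor $u\mapsto u^{\sigma'-(1-\sigma)}$; crucially, since $A\leq (|t|/(2\pi))^{1/2}$, the factor $(2A)^{2\sigma-1}\ll |t|^{\sigma-1/2}$ exactly cancels the $|t|^{\sigma-1/2}$ in the numerator, and one is again left with a lower bound $\gg T^{\varepsilon\beta/2}/\log T$.

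Finally, for $T\geq T_0(\beta_0,\varepsilon_0)$ chosen large enough, $T^{\varepsilon\beta/2}/\log T\geq (2T)^{\varepsilon\beta/3}$, and since the resulting polynomial has $1\leq A\leq v\leq 2A\leq (2T)^{1/2}$, the point $t$ is counted by $R_{\sigma',\varepsilon\beta/3}(2T)$, as required. The most delicate step is Case~2: without the precise cancellation between the size of $\chi(\sigma+it)\asymp|t|^{1/2-\sigma}$ and the Abel-summation loss $(2A)^{2\sigma-1+(2-\varepsilon)\beta}$, the final exponent would depend on $\sigma$, and the uniformity in $\sigma\in[1/2,1]$ needed for the statement would be lost.
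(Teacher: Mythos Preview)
Your argument is correct and follows essentially the same route as the paper: apply the approximate functional equation with $x=y=(|t|/2\pi)^{1/2}$, split into the two cases coming from the main sum and the reflected sum, dyadically decompose, and then use partial summation (the paper's Lemma~\ref{le_partialsummation}) to shift each case to the line $\sigma+(2-\varepsilon)\beta$; the cancellation you highlight in Case~2 between $|t|^{\sigma-1/2}$ and the Abel-summation loss is exactly the computation the paper carries out.
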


Combining Lemmas~\ref{le_FromRtoLargeZeta},~\ref{le_density3} and~\ref{le_zetabig->Rbig}, we immediately obtain the following lemma.

\begin{lemma}
\label{le:RsigmaRec}
Let $\eta_0, \varepsilon_0 > 0$. There exists $T_0 = T_0(\eta_0, \varepsilon_0)$ such that the following holds. Let $\varepsilon > \varepsilon_0, \eta > \eta_0, \sigma \in [1/2, 1]$, and $T \geq T_0$. Then there exists $\beta \geq \eta-3\log\log T/\log T$ such that both of the following hold.
\begin{enumerate}[(i)]
\item We have
\begin{align}\label{eq:R2}
R_{\sigma,\eta}(T)\leq 50 T^{\beta-\eta}(\log T)^{2} R_{\sigma+(2-\varepsilon)\beta+1/\log T,\varepsilon \beta/4}(4T) + 4 T^{\frac{1-\sigma}{2}}.
\end{align}
\item We have
\begin{align}\label{eq:R3}
R_{\sigma,\eta}(T)\leq 50 T^{\beta-\eta}(\log T)^{4}\left(N\left(\sigma-\tfrac{1}{(\log \log T)^{1/2}},4T\right)+1\right) + 4 T^{\frac{1-\sigma}{2}}.
\end{align}
\end{enumerate}
\end{lemma}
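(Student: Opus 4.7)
The plan is to combine the three preceding lemmas in a mechanical way, with some bookkeeping of logarithmic losses. I would begin by applying Lemma~\ref{le_FromRtoLargeZeta}. Either $R_{\sigma,\eta}(T) \leq 4T^{(1-\sigma)/2}$, in which case both (i) and (ii) hold through their additive term, or there exists $\beta \geq \eta - 3\log\log T/\log T$ and a set $U \subseteq [-2T,2T]\setminus[-10,10]$ with $|U| \geq R_{\sigma,\eta}(T) T^{\eta}/(50 T^{\beta}(\log T)^{2})$ on which $|\zeta(\sigma+1/\log T + it)| > T^{\beta}$. The remaining work is to bound $|U|$ from above in two different ways, one giving (i) and the other giving (ii).

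For (i), I would apply Lemma~\ref{le_zetabig->Rbig} at interval parameter $2T$, at shift $\sigma + 1/\log T$, and with modified parameters $\beta^* = \beta \log T/\log 2T$ and $\varepsilon^*$ chosen so that $(2T)^{\beta^*} = T^{\beta}$ (so the large-value condition on $U$ passes through) and $(2-\varepsilon^*)\beta^* = (2-\varepsilon)\beta$ (so the $\sigma$ parameter of the output $R$ is exactly $\sigma + 1/\log T + (2-\varepsilon)\beta$, matching the form required in (i)). A short computation shows $\varepsilon^* = \varepsilon - (2-\varepsilon)\log 2/\log T$ and
\[
\frac{\varepsilon^*\beta^*}{3} = \frac{\varepsilon \beta}{3} - \frac{2\beta\log 2}{3\log 2T} \geq \frac{\varepsilon\beta}{4}
\]
once $T \geq T_0(\varepsilon_0,\eta_0)$ is sufficiently large. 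Since $R_{\sigma^*,\cdot}$ is non-increasing in its second argument, the conclusion of Lemma~\ref{le_zetabig->Rbig} gives $|U| \leq R_{\sigma+(2-\varepsilon)\beta + 1/\log T,\, \varepsilon\beta/4}(4T)$, and multiplying by $50 T^{\beta-\eta}(\log T)^{2}$ yields (i).

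For (ii) the argument is parallel but simpler. I would apply Lemma~\ref{le_density3}(ii) at interval parameter $2T$ with the same $\beta^* = \beta\log T/\log 2T$. The hypothesis $2T \geq e^{e^{1/\beta^*}}$ follows from $\beta \geq \eta_0/2$ once $T \geq T_0(\eta_0)$. This produces $|U| \leq (N(\sigma + 1/\log T - 1/(\log\log 2T)^{1/2},\, 4T) + 1)(\log 2T)^{2}$. Since $1/\log T - 1/(\log\log 2T)^{1/2} \geq -1/(\log\log T)^{1/2}$ and $N(\cdot,\cdot)$ is non-increasing in its first argument, the shift inside $N$ can be relaxed to the form appearing in (ii); absorbing $(\log 2T)^{2} \leq 4(\log T)^{2}$ into the overall constant gives (ii).

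The main technical annoyance, rather than any conceptual obstacle, is keeping track of the $O(1/\log T)$ and $2T$-versus-$T$ mismatches so that, after composing the three lemmas, the output parameters line up with the slightly slack target parameters $\sigma + (2-\varepsilon)\beta + 1/\log T$ and $\varepsilon\beta/4$ (rather than $\varepsilon\beta/3$) in (i) and with $\sigma - 1/(\log\log T)^{1/2}$ in (ii). The slack in passing from $\varepsilon\beta/3$ to $\varepsilon\beta/4$ and from $1/(\log\log 2T)^{1/2}$ to $1/(\log\log T)^{1/2}$ is exactly what absorbs all the logarithmic corrections. Once $T$ is chosen large enough in terms of $\varepsilon_0$ and $\eta_0$, everything fits, and the lemma is just a composition of Lemmas~\ref{le_FromRtoLargeZeta},~\ref{le_zetabig->Rbig}, and~\ref{le_density3}(ii).
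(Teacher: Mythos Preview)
Your proposal is correct and follows exactly the approach the paper indicates: the paper gives no detailed argument here, stating only that the lemma follows ``immediately'' by combining Lemmas~\ref{le_FromRtoLargeZeta},~\ref{le_density3}, and~\ref{le_zetabig->Rbig}, and you have carried out precisely that combination with careful parameter bookkeeping. The only (harmless) imprecision is in part (ii), where $(\log T)^2(\log 2T)^2$ slightly exceeds $(\log T)^4$, so the constant $50$ in~\eqref{eq:R3} is not quite recovered; this is immaterial since the constant is absorbed into $1000^j(\log(4^jT))^{4j}$ in the proof of Lemma~\ref{le_density2}.
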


Finally using Lemma~\ref{le:RsigmaRec} an inductive argument will yield
\begin{lemma}\label{le_density2} Let $\varepsilon, \eta > 0$. There exists $C = C(\varepsilon)$ such that the following holds. Let $T \geq 1$. Then, for any $\sigma \in [1/2, 1]$, we have
\begin{align}
\label{eq:indconcl}
R_{\sigma,\eta}(T)\ll_{\varepsilon, \eta} T^{2\varepsilon} \max_{\sigma-\varepsilon \leq \alpha \leq 1} T^{(\alpha-\sigma)/2} N(\alpha, C \cdot T) + T^{(1-\sigma)/2 + 2\varepsilon}.
\end{align}
\end{lemma}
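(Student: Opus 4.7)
The plan is to iterate Lemma~\ref{le:RsigmaRec}, using part (i) to shift $\sigma$ upward while incurring a small multiplicative loss and invoking part (ii) at the end to convert the residual Dirichlet-polynomial count into a zero count. I would fix an auxiliary parameter $\varepsilon' := \varepsilon^2/100$ to feed into Lemma~\ref{le:RsigmaRec}. Starting from $(\sigma_0, \eta_0, T_0) := (\sigma, \eta, T)$, at step $i$ I would apply the lemma to obtain some $\beta_i \geq \eta_i - o(1)$, and then branch on the size of $\beta_i$: if $\beta_i \leq \eta_i + \varepsilon$ I would use part (ii) and halt; otherwise I would use part (i) with $\sigma_{i+1} := \sigma_i + (2-\varepsilon')\beta_i + 1/\log T_i$, $\eta_{i+1} := \varepsilon'\beta_i/4$, $T_{i+1} := 4T_i$, and continue. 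I may assume $T \geq T_0(\varepsilon, \eta)$ is large, since otherwise $R_{\sigma,\eta}(T) \leq 2T+1$ is absorbed into the implicit constant.

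In the large-$\beta$ branch one always has $\beta_i > \varepsilon$, so $\sigma_i$ grows by at least $(2-\varepsilon')\varepsilon > \varepsilon$ per step. The convexity bound for $\zeta$ gives $\beta_i \leq (1-\sigma_i)/2 + o(1)$, which both ensures $\sigma_{i+1} < 1$ throughout (keeping Lemma~\ref{le:RsigmaRec} applicable) and forces the small-$\beta$ branch once $\sigma_i \geq 1 - 2\varepsilon$. Hence the iteration terminates after at most $K \leq \lceil 1/\varepsilon\rceil$ steps.

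Unrolling the recursion yields an inequality of the form
\[
R_{\sigma,\eta}(T) \leq L\cdot\bigl(N(\alpha, C\cdot T) + 1\bigr) + \sum_{i=0}^{K} L_i \cdot 4\, T_i^{(1-\sigma_i)/2},
\]
with $L_i := \prod_{j<i} 50\,T_j^{\beta_j - \eta_j}(\log T_j)^{O(1)}$, $L := L_K \cdot 50\, T_K^{\beta_K - \eta_K}(\log T_K)^4$, $\alpha := \sigma_K - (\log\log T_K)^{-1/2} \in [\sigma-\varepsilon, 1]$, and $C := 4^{K+1}$. The crux is to bound the accumulated loss exponent $\sum_{j=0}^K (\beta_j - \eta_j)$: using the recurrence $\eta_{j+1} = \varepsilon'\beta_j/4$, it telescopes to $\beta_K - \eta + (1-\varepsilon'/4)\sum_{j<K}\beta_j$. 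Combining this with $(2-\varepsilon')\sum_{j<K}\beta_j \leq \sigma_K - \sigma$, the identity $(1-\varepsilon'/4)/(2-\varepsilon') = 1/2 + \varepsilon'/[4(2-\varepsilon')]$, and the small-$\beta$ bound $\beta_K \leq 2\varepsilon$, I would get $\sum_{j=0}^K(\beta_j - \eta_j) \leq (\alpha-\sigma)/2 + O(\varepsilon)$, whence $L \leq T^{2\varepsilon + (\alpha-\sigma)/2}$ after absorbing the $(\log T)^{O(K)}$ and the $K$-fold product of constants into $T^\varepsilon$. An analogous telescoping handles each tail term $L_i T_i^{(1-\sigma_i)/2}$ with exponent $\leq (1-\sigma)/2 + O(\varepsilon')$, and summing over the $K = O_\varepsilon(1)$ tail terms together with the $L\cdot 1$ piece gives the $T^{(1-\sigma)/2 + 2\varepsilon}$ contribution.

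The main obstacle I expect is calibrating $\varepsilon'$ against $\varepsilon$: each iteration contributes a correction of order $\varepsilon'$ to the loss exponent via the $\varepsilon'/[4(2-\varepsilon')]$ correction above, while the number of iterations can be as large as $1/\varepsilon$, giving a cumulative correction of order $\varepsilon'/\varepsilon$. Choosing $\varepsilon' = \varepsilon^2/100$ keeps this below $\varepsilon/100$, and a final relabelling of $\varepsilon$ by a small constant multiple then yields the claimed exponents $T^{2\varepsilon}$ and $T^{(1-\sigma)/2 + 2\varepsilon}$.
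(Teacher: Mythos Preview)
Your approach is correct and essentially the same as the paper's: both iterate Lemma~\ref{le:RsigmaRec}, exploiting the slack in the $(2-\varepsilon)\beta$ shift of $\sigma$ to absorb the $T^{\beta-\eta}$ loss at each step, and terminate after $O_\varepsilon(1)$ steps. The only differences are organizational --- the paper runs a downward induction on $\sigma$-ranges with threshold $\beta \lessgtr \varepsilon/3$ and the final bound built into the induction hypothesis (which also sidesteps your convexity argument by allowing $\sigma>1$, where $R_{\sigma,\eta}=0$ trivially), whereas you unroll forward with threshold $\beta_i \lessgtr \eta_i+\varepsilon$ and track the telescoped loss exponent explicitly.
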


\begin{proof}[Proof of Theorem~\ref{th:density->large} assuming Lemmas~\ref{le_density1} and~\ref{le_density2}] By Lemma~\ref{le_density1} with $\eta = \varepsilon/2$, we have
\begin{align*}
&\left|\left\{t \in [-T, T]\colon\left|\sum_{M <m\leq M'} \frac{1}{m^{1+it}}\right| \geq M^{-\nu} \text{  for some $M \in [T^\varepsilon, T^{1/2}/2]$ and $M' \in (M, 2M]$}\right\}\right| \\
&\leq R_{1-\nu-\varepsilon/2, \varepsilon^2/4}(T).
\end{align*}
The claim now follows from Lemma~\ref{le_density2} with $\varepsilon/4$ in place of $\varepsilon$.
\end{proof}
 
\section{Auxiliary results}
In this section we collect a few standard lemmas concerning the Riemann zeta function and its zeros. The first one contains two forms of the approximate functional equation.
\begin{lemma}
\label{le:AFE}
\begin{enumerate}[(i)]
\item Let $\sigma \in [0, 1], t \in \mathbb{R},$ and let $x, y \geq 1$ be such that $2\pi xy = |t|$. Then
\begin{align*}
\zeta(\sigma+it)=\sum_{n\leq x} \frac{1}{n^{\sigma+it}}+ \chi(\sigma+it)\sum_{n\leq y} \frac{1}{n^{1-\sigma-it}}+O\left(\frac{\log |t|}{x^{\sigma}} + \frac{x^{1-\sigma}}{|t|^{1/2}}\right),  \end{align*}
where $\chi\colon \mathbb{C} \to \mathbb{C}$ satisfies 
\begin{align}
\label{eq:chibound}
|\chi(\sigma+it)|\leq 100 (|t|+1)^{1/2-\sigma}.
\end{align}
\item Let $T \geq 3, \sigma \geq 1/2,$ and $t \in [T, 2T]$. Then
\[
\zeta(\sigma+it) = \sum_{n \leq T} \frac{1}{n^{\sigma + it}} + O(T^{-\sigma}).
\]
\end{enumerate}
\end{lemma}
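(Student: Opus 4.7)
For part (i), the plan is to reproduce the classical proof of the Hardy--Littlewood approximate functional equation. Starting from Euler--Maclaurin applied to $f(u)=u^{-s}$ (or, equivalently, from a contour-integral truncation of $\zeta(s+w)x^w/w$), one obtains for $\sigma>0$ and $x\geq 1$
$$\zeta(\sigma+it)=\sum_{n\leq x}\frac{1}{n^{\sigma+it}}+\frac{x^{1-\sigma-it}}{\sigma+it-1}+O(x^{-\sigma}),$$
together with the analogous identity for $\zeta(1-\sigma-it)$ truncated at $y$. Plugging the $y$-truncated expansion of $\zeta(1-s)$ into the functional equation $\zeta(s)=\chi(s)\zeta(1-s)$, the two Dirichlet sums of the statement appear as main terms, while the stray contributions $x^{1-s}/(s-1)$ and $\chi(s)y^{s}/(-s)$ combine. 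Using $|\chi(s)|\asymp|t|^{1/2-\sigma}$ together with the coupling $2\pi xy=|t|$, the stray contribution becomes $O(x^{1-\sigma}/|t|^{1/2})$; the $\log|t|$ in the first error is standard bookkeeping from the Euler--Maclaurin remainder (or from the smoothing cutoff if one uses the contour approach).

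The bound~\eqref{eq:chibound} on $\chi$ follows from Stirling applied to the explicit expression $\chi(s)=2^{s}\pi^{s-1}\sin(\pi s/2)\Gamma(1-s)$: for $|t|\geq 1$ this gives $|\chi(\sigma+it)|=(|t|/(2\pi))^{1/2-\sigma}(1+O(1/|t|))$ uniformly in $\sigma\in[0,1]$, and for $|t|\leq 1$ the function $\chi$ is bounded away from its pole at $s=1$. The shift $|t|+1$ in the bracket together with the generous constant $100$ absorbs both ranges.

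For part (ii), I would apply Euler--Maclaurin to $f(u)=u^{-\sigma-it}$ with cutoff $N=T$, which yields, for $\sigma>0$ and $|t|\ll T$,
$$\zeta(\sigma+it)=\sum_{n\leq T}\frac{1}{n^{\sigma+it}}+\frac{T^{1-\sigma-it}}{\sigma+it-1}+O(T^{-\sigma}).$$
Since $|t|\in[T,2T]$, the middle term is bounded in modulus by $T^{1-\sigma}/(|t|-1)\ll T^{-\sigma}$ and is absorbed into the error.

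I do not anticipate a substantive obstacle, since both claims are classical. The only care is in (i), where one must check that the coupling $2\pi xy=|t|$ makes the two secondary terms balance to exactly $O(x^{1-\sigma}/|t|^{1/2})$ uniformly in $\sigma\in[0,1]$ — a short Stirling / stationary-phase computation that I would carry out only to the precision needed for the stated error.
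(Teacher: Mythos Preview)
Your treatment of part~(ii) is correct and coincides with what the paper does: it simply cites Titchmarsh's Theorem~4.11 (the partial sum formula with cutoff $\asymp|t|$), which is exactly the Euler--Maclaurin argument you describe. For part~(i), too, the paper gives no proof of its own but refers to Titchmarsh's Theorem~4.13 and to Hardy--Littlewood's original paper.

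For part~(i), however, your sketch has a genuine gap. The partial sum formula
\[
\zeta(s)=\sum_{n\leq x}\frac{1}{n^{s}}+\frac{x^{1-s}}{s-1}+O(x^{-\sigma})
\]
carries the stated error only under a constraint of the type $x\gtrsim|t|$; this is precisely the hypothesis in Titchmarsh's Theorem~4.11. In the approximate functional equation one has $2\pi xy=|t|$, so $x$ may be as small as $|t|^{1/2}$ (or smaller), and then the Euler--Maclaurin remainder $-s\int_x^\infty\{u\}u^{-s-1}\,\d u$ is of size $|t|\,x^{-\sigma}$, not $x^{-\sigma}$. The same objection applies to the $y$-truncation of $\zeta(1-s)$. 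Moreover, merely plugging a $y$-truncated expansion of $\zeta(1-s)$ into $\zeta(s)=\chi(s)\zeta(1-s)$ yields only the dual sum $\chi(s)\sum_{n\leq y}n^{s-1}$; the first sum $\sum_{n\leq x}n^{-s}$ does not appear that way, so the two expansions cannot simply be ``combined'' as you suggest.

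The actual proof (Hardy--Littlewood, or Titchmarsh~4.13) starts from the partial sum with a cutoff where it is valid and then applies a saddle-point / van der Corput analysis to the tail (or, in the contour-integral version, shifts past the critical line and performs a stationary-phase computation on the resulting integral). That analysis is exactly what produces the dual sum over $n\leq y$ and is where the coupling $2\pi xy=|t|$ enters; it is the substance of the theorem, not bookkeeping on the secondary terms. So while your bound on $\chi$ and your part~(ii) are fine, the plan for~(i) would not go through as written.
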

\begin{proof} Note that in part (i) we have $|t| \geq 2\pi$ and in part (ii) we have $|t| \geq 3$, so there are no issues with the pole of $\zeta(s)$ at $s = 1$. Part (i) follows e.g. from \cite[Theorem 4.13, (4.12.3) and the formulas preceding (4.12.3)]{titchmarsh} (note that while~\cite{titchmarsh} states the result for $\sigma \in (0,1)$, it actually holds uniformly in our wider range, see e.g. the original result of Hardy and Littlewood~\cite[Theorem I]{H-L}) whereas part (ii) follows e.g. from~\cite[Theorem 4.11]{titchmarsh}.
\end{proof} 

The second lemma gives a convexity bound for $\zeta(s)$.
\begin{lemma}
\label{le:convexity}
Let $\sigma \in [0, 1]$ and $t \in \mathbb{R}$ with $|t| \geq 2\pi$.Then
\begin{align*}
|\zeta(\sigma+it)| \ll |t|^{(1-\sigma)/2} \log |t|.
\end{align*}
\end{lemma}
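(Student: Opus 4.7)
The plan is to apply the approximate functional equation of Lemma~\ref{le:AFE}(i) with the symmetric choice $x=y=\sqrt{|t|/(2\pi)}$, so that $2\pi xy=|t|$ holds. Then bound each piece separately and check that every contribution matches the target size $|t|^{(1-\sigma)/2}\log|t|$.

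With this choice, the first Dirichlet sum is bounded by
\[
\left|\sum_{n\leq x}\frac{1}{n^{\sigma+it}}\right|\leq \sum_{n\leq x}n^{-\sigma}\ll x^{1-\sigma}\log x \ll |t|^{(1-\sigma)/2}\log|t|,
\]
where the estimate $\sum_{n\leq x}n^{-\sigma}\ll x^{1-\sigma}\log x$ holds uniformly for $\sigma\in[0,1]$ (the $\log$ factor is only needed when $\sigma$ is close to $1$). For the second sum I would invoke the bound \eqref{eq:chibound} on $\chi$:
\[
|\chi(\sigma+it)|\sum_{n\leq y}n^{\sigma-1}\ll |t|^{1/2-\sigma}\cdot y^{\sigma}\log y\ll |t|^{1/2-\sigma}\cdot |t|^{\sigma/2}\log|t|=|t|^{(1-\sigma)/2}\log|t|,
\]
using $\sum_{n\leq y}n^{\sigma-1}\ll y^{\sigma}\log y$ for $\sigma\in[0,1]$.

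It remains to check the error term $\log|t|/x^{\sigma}+x^{1-\sigma}/|t|^{1/2}$ from Lemma~\ref{le:AFE}(i). Substituting $x=\sqrt{|t|/(2\pi)}$ gives the first summand as $\ll |t|^{-\sigma/2}\log|t|\leq \log|t|$ and the second as $\ll |t|^{-\sigma/2}\leq 1$, both of which are absorbed into the target bound. Combining the three contributions and using $|t|\geq 2\pi$ so that the pole of $\zeta$ at $s=1$ plays no role, one obtains the claimed convexity bound.

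There is no real obstacle here; the only mildly delicate point is ensuring that the partial sum estimates $\sum_{n\leq z}n^{-a}\ll z^{1-a}\log z$ are applied uniformly across the whole strip $a\in[0,1]\cup[0,1]$ (including the endpoints where a $\log$ factor is genuinely needed), which is why the stated bound carries a $\log|t|$ rather than being purely polynomial in $|t|$.
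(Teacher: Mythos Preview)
Your proof is correct and follows exactly the same approach as the paper: apply Lemma~\ref{le:AFE}(i) with the symmetric choice $x=y=\sqrt{|t|/(2\pi)}$ and estimate all terms trivially. The paper's proof is a one-line reference to this strategy, and your write-up simply makes the trivial estimates explicit.
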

\begin{proof} 
This follows from applying Lemma~\ref{le:AFE}(i) with $x = y = |t/(2\pi)|^{1/2}$ and estimating all terms trivially.
\end{proof}

The third lemma gives an upper bound for the number of zeros in boxes of height $1$.
\begin{lemma}
\label{le:zerosinboxes} 
Let $U\in \mathbb{R}$. Then there are $\ll \log(|U|+2)$ zeros $\rho$ of $\zeta$ (counted with multiplicities) in the region $\Re(\rho)\geq 0, \Im(\rho)\in [U,U+1]$. 
\end{lemma}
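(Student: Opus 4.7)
The plan is to use Jensen's formula applied to $\zeta(s)$ on a disk of bounded radius centered high on the line $\Re(s)=2$ at height $U$, and deduce a bound for the number of zeros in a smaller concentric disk that already contains the unit box in question.

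For $|U|\leq 20$, say, the bound is trivial since $\zeta$ has only finitely many zeros in any bounded region. So assume $|U|>20$. Set $a=2+iU$ and work on the closed disk $\overline{D}(a,3)$. Since $|a-1|=|1+iU|\geq |U|>3$, the disk avoids the pole at $s=1$, and at the center $|\zeta(a)|\geq 1/\zeta(2)>0$, so $\log|\zeta(a)|=O(1)$. The standard consequence of Jensen's formula
\[
n(r)\log(R/r)\leq \frac{1}{2\pi}\int_0^{2\pi}\log|\zeta(a+Re^{i\theta})|\,d\theta - \log|\zeta(a)|,
\]
applied with $R=3$ and $r=\sqrt{5}$, then reduces the problem to an upper bound for $\log|\zeta(s)|$ on the circle $|s-a|=3$.

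On that circle we have $\Re(s)\in [-1,5]$ and $|\Im(s)|\leq |U|+3\leq 2|U|$. For $\Re(s)\geq 1$ one has $|\zeta(s)|\ll \log|U|$ by elementary estimates on the series. For $\Re(s)\in [0,1]$ the convexity bound of Lemma~\ref{le:convexity} gives $|\zeta(s)|\ll |U|^{1/2}\log|U|$. For $\Re(s)\in [-1,0]$ one applies the functional equation $\zeta(s)=\chi(s)\zeta(1-s)$ together with the bound $|\chi(s)|\ll |U|^{1/2-\Re(s)}$ from~\eqref{eq:chibound} and the bound on $|\zeta(1-s)|$ in the strip $\Re(1-s)\in [1,2]$, giving $|\zeta(s)|\ll |U|^{3/2}$. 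In every case $\log|\zeta(s)|\ll \log|U|$, so the Jensen inequality yields $n(\sqrt{5})\ll \log|U|$.

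Finally, any zero $\rho$ with $\Re(\rho)\geq 0$ and $\Im(\rho)\in [U,U+1]$ lies in $D(a,\sqrt{5})$, since the maximum distance from $a=2+iU$ to such a point is at most $\sqrt{2^2+1^2}=\sqrt{5}$ (noting that the Euler product forbids zeros with $\Re(\rho)>1$, so only $\Re(\rho)\in[0,1]$ matters). Hence the number of zeros in our region is at most $n(\sqrt{5})\ll \log(|U|+2)$, which is the desired bound. The only slightly delicate step is the careful bookkeeping of the upper bound for $|\zeta(s)|$ across the different parts of the circle, especially the piece $\Re(s)<0$ where one must invoke the functional equation rather than directly cite Lemma~\ref{le:convexity}; everything else is routine.
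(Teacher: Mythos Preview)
Your argument is correct. The paper does not actually give a proof of this lemma; it simply cites \cite[Theorem 9.2]{titchmarsh}. Your Jensen-formula approach is one of the classical routes to this estimate (Titchmarsh's own proof proceeds slightly differently, via the Hadamard product and the real part of $\xi'/\xi(2+iT)$, but the two arguments are close cousins and equally standard).

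One minor bookkeeping remark: you invoke~\eqref{eq:chibound} for $\sigma\in[-1,0]$, but in the paper that bound is stated as part of Lemma~\ref{le:AFE}(i), which assumes $\sigma\in[0,1]$. The bound $|\chi(\sigma+it)|\ll(|t|+1)^{1/2-\sigma}$ does of course hold uniformly for $\sigma$ in any bounded range by Stirling's formula, so the mathematics is fine, but strictly speaking the citation should be replaced by a one-line appeal to Stirling rather than to~\eqref{eq:chibound}.
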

\begin{proof} 
See e.g.~\cite[Theorem 9.2]{titchmarsh}.
\end{proof}

\section{Partial summation and proof of Lemma~\ref{le_density1}}
We shall frequently use the following simple partial summation lemma. 
\begin{lemma}[Comparing zeta sums on different lines]\label{le_partialsummation}
Let $\sigma, t, \beta\in \mathbb{R}$. Let $M_2 > M_1 \geq 1.$ If $\beta\geq 0$, we have
\begin{align*}
\left|\sum_{M_1 < m \leq M_2}\frac{1}{m^{\sigma+it}}\right|\leq 4 M_2^{\beta}\max_{M_1 < y \leq M_2} \left|\sum_{M_1 < m \leq y} \frac{1}{m^{\sigma+\beta+it}}\right|,
\end{align*}
and if $\beta<0$, we have 
\begin{align*}
\left|\sum_{M_1 < m \leq M_2}\frac{1}{m^{\sigma+it}}\right|\leq 4 M_1^{\beta}\max_{M_1 < y \leq M_2} \left|\sum_{M_1 < m \leq y} \frac{1}{m^{\sigma+\beta+it}}\right|.
\end{align*}
\end{lemma}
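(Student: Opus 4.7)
The plan is to apply Abel summation in the form
\[
\sum_{M_1<m\le M_2} b_m c_m = c_{M_2} S(M_2) - \int_{M_1}^{M_2} S(u)\, c'(u)\, du,
\]
with the factorization $\frac{1}{m^{\sigma+it}} = m^{\beta}\cdot\frac{1}{m^{\sigma+\beta+it}}$. Writing $a_m \coloneqq \frac{1}{m^{\sigma+\beta+it}}$, $c(u)\coloneqq u^{\beta}$, and $S(y)\coloneqq \sum_{M_1<m\le y} a_m$, this gives
\[
\sum_{M_1<m\le M_2}\frac{1}{m^{\sigma+it}} \;=\; M_2^{\beta}\, S(M_2) \;-\; \beta\int_{M_1}^{M_2} S(u)\, u^{\beta-1}\, du.
\]

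Let $S^{\ast}\coloneqq \max_{M_1<y\le M_2}|S(y)|$. The boundary term is trivially bounded by $M_2^{\beta} S^{\ast}$, and the integral is bounded in absolute value by $S^{\ast}\cdot\bigl|M_2^{\beta}-M_1^{\beta}\bigr|$, since $\int_{M_1}^{M_2}\beta u^{\beta-1}\,du = M_2^{\beta}-M_1^{\beta}$. Now split into two cases. If $\beta\ge 0$, then $u^{\beta}$ is increasing, so $|M_2^{\beta}-M_1^{\beta}|\le M_2^{\beta}$ and both contributions are $\le M_2^{\beta} S^{\ast}$, yielding a total bound of $2 M_2^{\beta} S^{\ast}$, which is stronger than the claimed $4M_2^{\beta}S^{\ast}$. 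If $\beta<0$, then $u^{\beta}$ is decreasing, so $M_2^{\beta}\le M_1^{\beta}$ and $|M_2^{\beta}-M_1^{\beta}|\le M_1^{\beta}$; hence both terms are $\le M_1^{\beta} S^{\ast}$, giving a total bound of $2 M_1^{\beta} S^{\ast}$, again stronger than the claim.

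There is no real obstacle here: the lemma is a direct consequence of Abel summation plus the monotonicity of $u\mapsto u^{\beta}$. The generous constant $4$ in the statement simply absorbs the two contributions (boundary and integral) with room to spare, which is presumably chosen so that one does not have to remember the precise constant in later applications. One minor point worth mentioning is that we do not need to take absolute values inside the sum before applying partial summation; the cancellation among the terms $1/m^{\sigma+\beta+it}$ is exactly what is captured by the partial sum $S(y)$, and it is this cancellation that the maximum on the right-hand side is measuring.
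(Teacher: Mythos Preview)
Your proof is correct and follows essentially the same route as the paper: Abel summation with the factorization $m^{-\sigma-it}=m^{\beta}\cdot m^{-\sigma-\beta-it}$, followed by the triangle inequality on the boundary term and the integral term. The paper bounds the sum by $2(M_2^{\beta}+M_1^{\beta})\max_y|S(y)|$ and then specializes to $4M_2^{\beta}$ or $4M_1^{\beta}$, whereas you bound the integral directly by $|M_2^{\beta}-M_1^{\beta}|\,S^{\ast}$ and obtain the slightly sharper constant $2$; this is a cosmetic difference only.
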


\begin{proof}
By partial summation, we have
\begin{align*}
\sum_{M_1 < m \leq M_2}\frac{1}{m^{\sigma+it}} &= M_2^{\beta} \sum_{M_1 < m \leq M_2} \frac{1}{m^{\sigma+\beta+it}} -\beta\int_{M_1}^{M_2} y^{\beta-1} \sum_{M_1 < m \leq y} \frac{1}{m^{\sigma+\beta+it}} \d y, 
\end{align*}
and hence by the triangle inequality we obtain
\begin{align*}
\left|\sum_{M_1 < m \leq M_2}\frac{1}{m^{\sigma+it}}\right|\leq 2 (M_2^{\beta}+M_1^{\beta}) \cdot \max_{M_1< y\leq M_2} \left|\sum_{M_1 < m \leq y} \frac{1}{m^{\sigma+\beta+it}}\right|.   
\end{align*}
Both claims of the lemma follow immediately from this. 
\end{proof}

\begin{proof}[Proof of Lemma \ref{le_density1}]
By Lemma~\ref{le_partialsummation}, we have, for any $M \in [T^\varepsilon, T^{1/2}]$, $M' \in (M, 2M]$, and $|t| \leq T$,
\begin{align*}
\left|\sum_{M < m \leq M'} \frac{1}{m^{1+it}}\right| \leq 4 M^{-\nu-\eta} \max_{M \leq y \leq M'} \left|\sum_{M < m\leq y} \frac{1}{m^{1-\nu-\eta+it}}\right|.
\end{align*}
Hence, whenever we have 
\[
\left|\sum_{M < m \leq M'} \frac{1}{m^{1+it}}\right| > M^{-\nu},
\]
for some $M \in [T^\varepsilon, T^{1/2}]$ and $M' \in (M, 2M]$, we  also have
\begin{align*}
\left|\sum_{M < m\leq M'} \frac{1}{m^{1-\nu-\eta+it}}\right|> M^{\eta}/4 
\end{align*}
for some $M \in [T^\varepsilon, T^{1/2}]$ and $M' \in (M, 2M]$.
Recalling that $M \geq T^{\varepsilon}$ and $T \geq 4^{2/(\varepsilon \eta)}$, we see that $M^{\eta}/4 \geq T^{\varepsilon \eta/2}$, so the claim follows.
\end{proof}

\section{From large Dirichlet polynomial values to large zeta values --- proof of Lemma \ref{le_FromRtoLargeZeta}}
We now prove Lemma~\ref{le_FromRtoLargeZeta} using Perron's formula.
\begin{proof}[Proof of Lemma~\ref{le_FromRtoLargeZeta}]
Let $1\leq A\leq B\leq T^{1/2}$ with $B\leq 2A$. 
Consider first the case $\sigma \in [1-2/\log T, 1]$. In this case trivially
\[
\left|\sum_{A < n \leq B} \frac{1}{n^{\sigma+it}}\right| \leq \sum_{n \leq T^{1/2}} \frac{1}{n^{\sigma}} \leq T^{1/\log T} \sum_{n \leq T^{1/2}} \frac{1}{n} \leq e(\log T+1),
\]
and hence, once $T_0$ is sufficiently large in terms of $\eta_0$, we have $R_{\sigma, \eta}(T) = 0$ and the claim follows immediately.

Hence we can assume $\sigma\in [1/2,1-2/\log T]$. Let also $t\in \mathbb{R}$. By Perron's formula, we have
\begin{align*}
\sum_{A < n \leq B} \frac{1}{n^{\sigma+it}} = \frac{1}{2\pi i}\int_{1-iT}^{1+iT}\zeta(s+\sigma+it)\frac{B^s-A^s}{s}\d s+O(1).    
\end{align*}

By shifting the line of integration to $\textnormal{Re}(s)=1/\log T$, picking up the residue from the pole at $s=1-\sigma-it$ and using the convexity bound (Lemma~\ref{le:convexity}) to estimate the error from this shift, we see that
\begin{align*}
\sum_{A < n \leq B} \frac{1}{n^{\sigma+it}} = \frac{1}{2\pi i}\int_{1/\log T-iT}^{1/\log T+iT}\zeta(s+\sigma+it)\frac{B^s-A^s}{s}\d s+ \frac{B^{1-\sigma-it}-A^{1-\sigma-it}}{1-\sigma-it}+O(1).    
\end{align*}
The second term on the right is also $O(1)$, provided that $|t|\geq T^{(1-\sigma)/2}$. 

Hence, by the triangle inequality we conclude that if $|\sum_{A < n \leq B} \frac{1}{n^{\sigma+it}} |\geq T^{\eta}, |t|\geq T^{(1-\sigma)/2}$, and $T_0$ is sufficiently large in terms of $\eta_0$, then 
\begin{align*}
\int_{-T}^{T}|\zeta(\sigma+1/\log T+i(t+u))|\frac{\d u}{|u|+1/\log T}\geq \frac{T^{\eta}}{5}.   
\end{align*}
Since $|t| \geq T^{(1-\sigma)/2}$, the part with $|t+u| \leq 10$ contributes to the left hand side by the convexity bound (Lemma~\ref{le:convexity}) $\ll (\log T)^2$. Hence once $T_0$ is sufficiently large in terms of $\eta_0$, we have
\begin{align}\label{eq:R4}
\int_{-T}^{T}\mathbf{1}_{|t+u| > 10} |\zeta(\sigma+1/\log T+i(t+u))|\frac{\d u}{|u|+1/\log T}\geq \frac{T^{\eta}}{10}.   
\end{align}

By the definition of $R_{\sigma,\eta}(T)$, we have~\eqref{eq:R4} for a set of measure $\geq R_{\sigma,\eta}(T)/2$ of $t\in [-T,T] \setminus [-T^{(1-\sigma)/2}, T^{(1-\sigma)/2}]$ (unless $R_{\sigma,\eta}(T)\leq 4T^{(1-\sigma)/2}$, in which case we are done). 

Let $\mathcal{R}_{\sigma,\eta}(T)$ be the set of such $t$. Then, integrating \eqref{eq:R4} over $t\in \mathcal{R}_{\sigma,\eta}(T)$ and applying Fubini's theorem, we see that 
\begin{align*}
 \int_{-T}^{T}\int_{\mathcal{R}_{\sigma,\eta}(T)} \mathbf{1}_{|t+u| > 10} |\zeta(\sigma+1/\log T+i(t+u))|\d t\frac{\d u}{|u|+1/\log T}\geq \frac{T^{\eta}R_{\sigma,\eta}(T)}{20}.  
\end{align*}
By the pigeonhole principle, this implies that
\begin{align*}
\int_{\mathcal{R}_{\sigma,\eta}(T)}\mathbf{1}_{|t+u_0| > 10} |\zeta(\sigma+1/\log T+i(t+u_0))|\d t\geq \frac{T^{\eta} R_{\sigma,\eta}(T)}{50 \log T}     
\end{align*}
for some $|u_0|\leq T$. By dyadic decomposition, this implies that there is some $\beta\geq \eta-3\frac{\log \log T}{\log T}$ such that
\begin{align*}
 |\zeta(\sigma+1/\log T+i(t+u_0))|\in (T^{\beta},2T^{\beta}] \end{align*}
for a set of measure 
\[
\geq \frac{R_{\sigma,\eta}(T)T^{-\beta+\eta}}{50 (\log T)^{2}}
\]
of $t\in [-T,T]$ with $|t+u_0|\geq 10$. The claim follows since $|u_0| \leq T$.
\end{proof}

\section{From large zeta values to zeros of zeta --- proof of Lemma~\ref{le_density3}}
The proof of Lemma~\ref{le_density3} follows an idea of Ivi\'c \cite{ivic-density}. We will deduce Lemma~\ref{le_density3} from the following lemma which shows a similar result for the logarithmic derivative of zeta. For convenience we write the following lemma in contrapositive form compared to Lemma~\ref{le_density3}.

\begin{lemma}[Large value of $\frac{\zeta'}{\zeta}(s)$ implies the existence of a nearby zero]\label{le_logderzetalarge} Let $T \geq T_0$ for a sufficiently large constant $T_0$. Let $\sigma_1 \in [-1, 2]$ and $u \in \mathbb{R}$ with $|u| \in [2, 2T]$ be such that there are no zeroes $\rho$ of the Riemann zeta function in the rectangle
\begin{align*}
  \textnormal{Re}(\rho)\geq \sigma_1 - \frac{1}{2(\log \log T)^{1/2}},\quad |\textnormal{Im}(\rho)-u| \leq 1. 
\end{align*}
Then
\begin{align*}
\left|\frac{\zeta'}{\zeta}(\sigma_1+iu)\right| \leq C_0(\log \log T)^{1/2} \log T
\end{align*}
for some absolute constant $C_0>0$.
\end{lemma}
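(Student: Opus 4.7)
The plan is to use the standard partial fraction expansion of $\zeta'/\zeta$ in the critical strip. Recall (see e.g.~\cite[Chapter 9]{titchmarsh}, or Davenport's \emph{Multiplicative Number Theory}, Ch.~12) that for $-1 \leq \sigma \leq 2$ and $|t| \geq 2$, writing $s = \sigma+it$,
\begin{align*}
\frac{\zeta'}{\zeta}(s) = \sum_{\substack{\rho = \beta + i\gamma \\ |\gamma - t| \leq 1}} \frac{1}{s-\rho} + O(\log(|t|+2)),
\end{align*}
where $\rho$ runs over nontrivial zeros of $\zeta$. This is a direct consequence of differentiating the Hadamard product and discarding the contribution of zeros with $|\gamma - t| > 1$, which is $O(\log(|t|+2))$ by Lemma~\ref{le:zerosinboxes}.

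I would apply this with $s = \sigma_1 + iu$. Since $|u| \leq 2T$, the error term is $O(\log T)$. For each zero $\rho = \beta+i\gamma$ appearing in the sum we have $|\gamma - u| \leq 1$, and since $|u| \geq 2$, every such zero is nontrivial, so $0 < \beta < 1$. The hypothesis of the lemma forces
\begin{align*}
\beta < \sigma_1 - \frac{1}{2(\log \log T)^{1/2}},
\end{align*}
and hence $|s - \rho| \geq \sigma_1 - \beta > \frac{1}{2(\log \log T)^{1/2}}$, giving $|1/(s-\rho)| \leq 2(\log\log T)^{1/2}$. The number of zeros with $|\gamma - u| \leq 1$ is $\ll \log(|u|+2) \ll \log T$ by Lemma~\ref{le:zerosinboxes}.

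Combining the two estimates yields
\begin{align*}
\left|\frac{\zeta'}{\zeta}(\sigma_1+iu)\right| \leq \sum_{|\gamma-u|\leq 1} \frac{1}{|s-\rho|} + O(\log T) \ll (\log T)(\log\log T)^{1/2} + \log T,
\end{align*}
which is the claimed bound with a suitable absolute constant $C_0$. The only real obstacle is the quotation of the partial fraction expansion with the correct uniform error term; once that is in hand the proof reduces to the pigeonhole-type argument above, combining the zero-free hypothesis (which gives the per-term bound) with Lemma~\ref{le:zerosinboxes} (which bounds the number of terms).
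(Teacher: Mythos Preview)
Your proof is correct and follows essentially the same approach as the paper: invoke the partial fraction expansion $\frac{\zeta'}{\zeta}(s)=\sum_{|\Im(\rho)-u|\leq 1}\frac{1}{s-\rho}+O(\log T)$ (the paper cites \cite[Theorem 9.6(A)]{titchmarsh}), bound each term by $2(\log\log T)^{1/2}$ using the zero-free hypothesis, and bound the number of terms by $O(\log T)$ via Lemma~\ref{le:zerosinboxes}.
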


\begin{proof}
The partial fraction expansion of the logarithmic derivative of $\zeta$ (see e.g. \cite[Theorem 9.6(A)]{titchmarsh}) gives
\begin{align}\label{eq:zeta'}
\frac{\zeta'}{\zeta}(\sigma_1+iu)=\sum_{|\Im(\rho)-u|\leq 1}\frac{1}{\sigma_1+iu-\rho}+O(\log T)  
\end{align}
uniformly for such $\sigma_1 \in [-1, 2]$ and $u \in \mathbb{R}$ with $|u| \in [2, 2T]$. Now the claim follows using Lemma~\ref{le:zerosinboxes} and the fact that $|\sigma_1+iu-\rho|\geq 1/(2(\log \log T))^{1/2}$ for the summands of \eqref{eq:zeta'}. 
\end{proof}

\begin{proof}[Proof of Lemma~\ref{le_density3}] We first show how part (ii) follows quickly from part (i). Indeed, assuming part (i) and denoting for brevity $\mathcal{L}=(\log T)^2/4$ and $\sigma_0=\sigma-1/(\log \log T)^{1/2}$, we have
\begin{align*}
&|\{t \in [-T, T] \colon |\zeta(\sigma+it)| \geq T^\beta\}|\\
\quad &\leq \mathcal{L} \cdot \#\Bigg\{n\in \left[-\left\lceil \frac{T}{\mathcal{L}}\right\rceil, \left\lceil \frac{T}{\mathcal{L}}\right\rceil\right] \cap \mathbb{Z} \colon |\zeta(\sigma+it)| \geq T^\beta\,\, \textnormal{for some}\,\,  t\in [n\mathcal{L},(n+1)\mathcal{L}]\Bigg\}\\
\quad &\leq \mathcal{L} \cdot \#\Bigg\{n\in \left[-\left\lceil \frac{T}{\mathcal{L}}\right\rceil, \left\lceil \frac{T}{\mathcal{L}}\right\rceil\right] \cap \mathbb{Z} \colon n \geq 2 \text{ and } \zeta(\sigma'+it')=0\\
&\qquad \qquad \qquad \quad \quad \textnormal{for some}\,\, (\sigma',t)\in [\sigma_0,1]\times [(n-1)\mathcal{L},(n+2)\mathcal{L}]\Bigg\} + 3 \mathcal{L}\\
&\leq \mathcal{L} \cdot 3 \cdot N\left(\sigma-\frac{1}{(\log \log T)^{1/2}}, 2T\right) + 3 \mathcal{L},
\end{align*}
and (ii) follows.

Now it suffices to prove part (i). Let $\sigma \in [1/2, 1]$ and $t \in \mathbb{R}$ with $|t| \in [(\log T)^2/2, T]$ be such that
\begin{align}\label{eq:zetaw}
\zeta(z)\neq 0\,\,\textnormal{ for }\,\, \textnormal{Re}(z)\geq \sigma-1/(\log \log T)^{1/2},\quad |\Im(z)-t| \leq (\log T)^2/4.
\end{align}
Our goal is to show that then~\eqref{eq:zetalarge} cannot hold.

Let $s = b+it$ with $b \in [\sigma, 2]$ and $\sigma, t$ as above, and let $Y \coloneqq (\log T)^{3/2}$.  By the Mellin inversion formula, for any $n \geq 1,$ we have
\begin{align*}
e^{-n/Y}=\frac{1}{2\pi i}\int_{2-i\infty}^{2+i\infty}\Gamma(w)\left(\frac{Y}{n}\right)^{w} \d w.    
\end{align*}
Multiplying by $\Lambda(n)n^{-s}$ and summing over $n$ (and using Fubini's theorem) we obtain
\begin{align*}
\sum_{n\geq 1} \Lambda(n)e^{-n/Y}n^{-s}=-\frac{1}{2\pi i} \int_{2-i\infty}^{2+i\infty}\frac{\zeta'}{\zeta}(s+w)\Gamma(w)Y^{w}\d w.
\end{align*}
We truncate the integral at height $Y$ and use the standard bound (which follows from Stirling's formula)
\begin{align}\label{eq:gamma}
|\Gamma(w)|\ll \frac{1}{|w|\exp(|\Im(w)|)}   \qquad \text{for\,\,\,  $\Re(w) \in [-1/2, 3]$}
\end{align}
to estimate the error from this. Using also $|\frac{\zeta'}{\zeta}(z)|\leq |\frac{\zeta'}{\zeta}(2)|\ll 1$ for $\Re(z)\geq 2$, we obtain
\begin{align*}
\sum_{n\geq 1}\Lambda(n)e^{-n/Y}n^{-s}=-\frac{1}{2\pi i} \int_{2-iY}^{2+iY}\frac{\zeta'}{\zeta}(s+w)\Gamma(w)Y^{w}\d w+O(1).    
\end{align*}

Then we move the integral to the line $\Re(w)=-1/(2(\log \log T)^{1/2})$. We pick up a residue from the pole at $w=0$ and estimate the error term from this change of lines using~\eqref{eq:gamma} and Lemma~\ref{le_logderzetalarge}.
This yields
\begin{align*}
\sum_{n\geq 1}\Lambda(n)e^{-n/Y}n^{-s}=-\frac{\zeta'}{\zeta}(s)-\frac{1}{2\pi i} \int_{-1/(2(\log \log T)^{1/2})-iY}^{-1/(2(\log \log T)^{1/2})+iY}\frac{\zeta'}{\zeta}(s+w)\Gamma(w)Y^{w}\d w+O(1).    
\end{align*}
Using again Lemma~\ref{le_logderzetalarge} and \eqref{eq:gamma}, we can bound the integral here to obtain
\begin{align}\label{eq:lambda}
\sum_{n\geq 1}\Lambda(n)e^{-n/Y}n^{-s}=-\frac{\zeta'}{\zeta}(s)+O(Y^{-1/(2(\log \log T))^{1/2}}(\log T)(\log \log T)^{1/2}).    
\end{align}
The error term here is certainly $O((\log T)/(\log \log T)^{200})$ by our choice of $Y$. Recall that~\eqref{eq:lambda} holds for $s = b+it$ for any $b \in [\sigma, 2]$. Hence, taking $s=b+it$ in \eqref{eq:lambda} and integrating both sides from $b=\sigma$ to $b=2$, we obtain
\begin{align*}
\log \zeta(\sigma+it)-\log \zeta(2+it)=\sum_{n\geq 2}\frac{\Lambda(n)}{\log n}e^{-n/Y}(n^{-\sigma-it}-n^{-2-it})+O\left(\frac{\log T}{(\log \log T)^{200}}\right).  
\end{align*}
We have $\log \zeta(2+it)\asymp 1$, and we can crudely estimate the sum over $n$ with absolute values to obtain
\begin{align*}
\log \zeta(\sigma+it)=O\left(\frac{Y^{1-\sigma}}{\log Y}+\frac{\log T}{(\log \log T)^{200}}\right).    
\end{align*}
Recalling the choice of $Y$ this means that $|\zeta(\sigma+it)|\leq T^{1/(\log \log T)^{100}}$ if $T\geq T_0$ with $T_0$ large enough. This completes the proof.
\end{proof}

\section{From large zeta values to large Dirichlet polynomial values --- proof of Lemma~\ref{le_zetabig->Rbig}}
We now prove Lemma~\ref{le_zetabig->Rbig} using the approximate functional equation for the zeta function (Lemma~\ref{le:AFE}).
\begin{proof}[Proof of Lemma~\ref{le_zetabig->Rbig}]
Writing
\[
\mathcal{T} \coloneqq \{t \in [-T, T] \colon |\zeta(\sigma+it)| \geq T^\beta\} \setminus [-2\pi, 2\pi],
\]
we need to show that
\begin{equation}
\label{eq:afeconcl}
|\mathcal{T}| \leq R_{\sigma+(2-\varepsilon)\beta, \varepsilon\beta/3}(2T).
\end{equation}
By the approximate functional equation for $\zeta(s)$ (Lemma~\ref{le:AFE} with $x = y = |t/2\pi|^{1/2}$), we have
\begin{align*}
\zeta(\sigma+it)=\sum_{n\leq |t/(2\pi)|^{1/2}} \frac{1}{n^{\sigma+it}}+ \chi(\sigma+it)\sum_{n\leq |t/(2\pi)|^{1/2}} \frac{1}{n^{1-\sigma-it}}+O(1),  \end{align*}
From the definition of $\mathcal{T}$ and~\eqref{eq:chibound}, we see that, once $T_0$ is sufficiently large in terms of $\eta_0$, for each $t \in \mathcal{T}$,
\begin{align*}
\left|\sum_{n \leq |t/(2\pi)|^{1/2}} \frac{1}{n^{\sigma + it}}\right|\geq \frac{1}{3} T^{\beta}\textnormal{ or }  \left|\sum_{n \leq |t/(2\pi)|^{1/2}} \frac{1}{n^{1-\sigma-it}}\right| \geq \frac{1}{300} T^{\beta+\sigma-1/2}.
\end{align*}
By Lemma \ref{le_partialsummation} and dyadic decomposition, this in turn implies that for some $1\leq M\leq T^{1/2}/2$ and some constant $c>0$ we have (once $T_0$ is sufficiently large in terms of $\beta_0$ and $\varepsilon_0$)
\begin{align*}
\left|\sum_{M < m \leq 2M} \frac{1}{n^{\sigma + (2-\varepsilon)\beta + it}}\right| &\geq c \frac{T^\beta M^{-(2-\varepsilon)\beta}}{\log T} \geq T^{\beta \varepsilon/3}\\
\textnormal{or }\qquad \left|\sum_{M < m \leq 2M} \frac{1}{n^{\sigma + (2-\varepsilon)\beta+it}}\right|&\geq c \frac{T^{\beta+\sigma-1/2} M^{1-2\sigma-(2-\varepsilon)\beta}}{\log T} \geq T^{\beta \varepsilon/3}.
\end{align*}
Since this holds for every $t\in \mathcal{T}$, we deduce that~\eqref{eq:afeconcl} indeed holds.
\end{proof}

\section{The inductive argument --- proof of Lemma~\ref{le_density2}}
We are now ready to prove Lemma~\ref{le_density2}.
\begin{proof}[Proof of Lemma~\ref{le_density2}]
Fix $\varepsilon, \eta > 0$. We can clearly assume that $\varepsilon$ and $\eta$ are small.  We shall use induction on $j \in \{0, 1, \dotsc, J\}$, where $J \coloneqq \lceil \frac{1/2}{\varepsilon/100}\rceil$. Write $\eta_0 = \eta (\varepsilon/20)^J$, and let $T_1 = T_1(\varepsilon, \eta)$ be sufficiently small in terms of $\varepsilon, \eta$, in particular we take $T_1 \geq T_0(\eta_0, \varepsilon)$ with $T_0$ as in Lemma~\ref{le:RsigmaRec}.

Our induction claim is that
\begin{equation}
\label{eq:IndClaim}
R_{\sigma,\eta'}(T)\leq 1000^j (\log (4^jT))^{4 j} T^{(1-\sigma)(\frac{1}{2}+\varepsilon)+\varepsilon/2}  \max_{\sigma-\varepsilon/2 \leq \alpha \leq 1} T^{(\alpha-1)(\frac{1}{2}+\varepsilon)} (N(\alpha, 4^j T)+1)
\end{equation}
whenever $\sigma \geq 1-j/(2J)$ and $\eta' \geq \eta_0 (\varepsilon/20)^{-j}$ and $T \geq T_1(\eta, \varepsilon)$. Note that this claim for $j = J$ implies Lemma~\ref{le_density2} (the restriction on $T$ is not a problem since the implied constant in Lemma~\ref{eq:indconcl} is allowed to depend on $\eta, \varepsilon$).

The base case $j=0$ is trivial since for $\sigma \geq 1$ and $\eta' \geq \eta(\varepsilon/20)^J$ we have $R_{\sigma, \eta'} = 0$ once $T_1$ is sufficiently large in terms of $\eta$ and $\varepsilon$.

Assume now that the induction claim holds for $j-1$ for some $j \in \{1, 2, \dotsc, J\}$. We shall show that it holds for $j$. Let now $\sigma \geq 1-j/(2J), \eta' \geq \eta_0 (\varepsilon/20)^{-j}$, and $T \geq T_1(\eta, \varepsilon)$, and let $\beta$ be as in Lemma~\ref{le:RsigmaRec} with $\eta'$ in place of $\eta$.

If $\beta \leq \varepsilon / 3$, then by \eqref{eq:R3} we have\footnote{We note that iterating \eqref{eq:R2} arbitrarily many times is not sufficient to reach the case $j=J$ --- at every step of the iteration the value of $\beta$ might be $\varepsilon/4$ times the previous value, and $\sum_{j\geq 1}\varepsilon^j$ converges. Hence, we eventually need to apply \eqref{eq:R3} as well --- as one would expect, since it is the part referring to zero density results.}, 
\begin{align*}
R_{\sigma,\eta'}(T)&\leq 50 T^{\beta}(\log T)^{4}(N(\sigma-1/(\log \log T)^{1/2},4T)+1) + 4 T^{\frac{1-\sigma}{2}} \\
&\leq 50 T^{\varepsilon/3} (\log T)^{4} (N(\sigma-1/(\log \log T)^{1/2}, 4 T) + 1) +  4T^{\frac{1-\sigma}{2}},
\end{align*}
which is sufficient once $T_1$ is sufficiently large.

On the other hand, if $\beta > \varepsilon/ 3$, then by \eqref{eq:R2}, 
\begin{align}
\label{eq:Rrec}
R_{\sigma,\eta'}(T)\leq 50 T^{\beta}(\log T)^{2} R_{\sigma+(2-\varepsilon)\beta+1/\log T,\varepsilon \beta/4}(2T) + 4 T^{\frac{1-\sigma}{2}}.
\end{align}
Now 
\[
\sigma+(2-\varepsilon)\beta+\frac{1}{\log T} \geq 1-\frac{j}{2J} + (2-\varepsilon)\frac{\varepsilon}{3} \geq 1-\frac{j-1}{2J}
\]
and 
\[
\frac{\varepsilon \beta}{4} \geq \frac{\varepsilon \eta'}{8} \geq \frac{\varepsilon}{8} \cdot \left(\frac{\varepsilon}{20}\right)^{-j} \eta_0 \geq \left(\frac{\varepsilon}{20}\right)^{-(j-1)} \eta_0.
\]
Hence, we can apply the induction hypothesis~\eqref{eq:IndClaim} for $j-1$ to the right-hand side of~\eqref{eq:Rrec}, obtaining
\begin{align*}
&R_{\sigma,\eta'}(T) \leq 50 T^{\beta}(\log T)^{2} \cdot 1000^{j-1} (\log (4^j T))^{4 (j-1)} (4T)^{(1-\sigma-(2-\varepsilon)\beta-1/\log T)(1/2+\varepsilon)+\varepsilon/2} \\
&\qquad \cdot \max_{\sigma+(2-\varepsilon)\beta+1/\log T-\varepsilon/2 \leq \alpha \leq 1} (4T)^{(\alpha-1)(1/2+\varepsilon)} (N(\alpha, 4^{j-1} \cdot 2 T)+1) + 4 T^{\frac{1-\sigma}{2}} \\
&\leq \frac{1000^j}{4} (\log (4^j T))^{4(j-1)+2}  T^{\beta+(1-\sigma-(2-\varepsilon)\beta)(1/2+\varepsilon)+\varepsilon/2} \max_{\sigma \leq \alpha \leq 1} T^{(\alpha-1)(1/2+\varepsilon)} (N(\alpha, 4^j T)+1) + 4 T^{\frac{1-\sigma}{2}} \\
&\leq 1000^j (\log (4^j T))^{4j}  T^{(1-\sigma)(1/2+\varepsilon)+\varepsilon/2} \max_{\sigma \leq \alpha \leq 1} T^{(\alpha-1)(1/2+\varepsilon)} (N(\alpha, 4^j T)+1)
\end{align*}
once $T_1$ is sufficiently large. Thus~\eqref{eq:IndClaim} holds for $j$ as claimed.  
\end{proof}

\section{Proof of Theorem~\ref{thm_DH}}\label{sec:thm_DH}

We need the following proposition in the proof of Theorem~\ref{thm_DH}.

\begin{proposition}\label{prop_sigma1/2} Assume Conjecture~\ref{conj:StrongDH}.
Let $\varepsilon>0$ and $k\in \mathbb{N}$. Let $T$ be sufficiently large in terms of $\varepsilon$. Let $F(s) = \prod_{j = 1}^k M_j(s)$, where
\[
M_j(s) = \sum_{\substack{M_j<m\leq 2M_j}} \frac{1}{m^s},
\]
with $M_j \geq T^\varepsilon$ such that $M_1 \dotsm M_k \asymp T^{1+o(1)}$.

Let $\mathcal{U} \subset [-T, T]$ be such that, for each $t \in \mathcal{U}$, one has $|M_j(1+it)| \geq M_j^{-1/2+20\varepsilon}$ for some $1\leq j\leq k$.
Then
\begin{align}\label{eq_m1m2}
\int_{\mathcal{U}} |F(1+it)|^2 \d t \ll_{\varepsilon} T^{-\delta_0}
\end{align}
for some $\delta_0 = \delta_0(\varepsilon)$.
\end{proposition}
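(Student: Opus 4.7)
The plan is to reduce to bounding $\int_{V_j} |F|^2 \, \d t$ for a fixed $j$, where $V_j := \{t \in [-T,T]: |M_j(1+it)| \geq M_j^{-1/2+20\varepsilon}\}$. A union bound over $j = 1, \dots, k$ effects this since $\mathcal{U} \subseteq \bigcup_j V_j$, losing only a factor of $k$. For the measure of $V_j$, I would apply Theorem~\ref{th:density->large} with $\nu = 1/2-20\varepsilon$; if $M_j > T^{1/2}/2$ I would first use partial summation (Lemma~\ref{le_partialsummation}) or the approximate functional equation (Lemma~\ref{le:AFE}(i)) to rephrase the condition on $|M_j(1+it)|$ via a shorter polynomial of length $\leq T^{1/2}/2$. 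Inserting the bound $N(\alpha, CT) \ll T^{(2-\delta)(1-\alpha)+o(1)}$ from Conjecture~\ref{conj:StrongDH} and optimizing the maximum in Theorem~\ref{th:density->large} over $\alpha \in [1/2+19\varepsilon, 1]$ yields $|V_j| \ll_\varepsilon T^{1-\delta/2+O(\varepsilon)}$.

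Next, I would exploit the multiplicative structure $F = \prod_{i=1}^k M_i$ by applying H\"older's inequality with exponents $p_i = (\log T)/\log M_i$, which satisfy $\sum_i 1/p_i = 1$ since $\prod_i M_i \asymp T^{1+o(1)}$:
\[
\int_{V_j} |F|^2 \, \d t \leq \prod_{i=1}^k \left( \int_{V_j} |M_i(1+it)|^{2p_i} \, \d t \right)^{1/p_i}.
\]
Each $M_i^{p_i}$ is then a Dirichlet polynomial of length $M_i^{p_i} = T$ with coefficients bounded by $d_{p_i}(n) \leq T^{o(1)}$. Bounding each factor by combining the Montgomery--Vaughan mean value theorem applied to $M_i^{2p_i}$ (of length $T^2$, giving $\int |M_i^{p_i}|^4 \ll T^{o(1)}$) with Cauchy--Schwarz yields $\int_{V_j} |M_i^{p_i}|^2 \ll |V_j|^{1/2} T^{o(1)}$, and hence $\int_{V_j} |F|^2 \ll |V_j|^{1/2} T^{o(1)} \ll T^{1/2 - \delta/4 + O(\varepsilon)}$---which falls short of the required $T^{-\delta_0}$.

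The main obstacle is therefore to improve this restricted mean value estimate to $\int_{V_j} |M_i^{p_i}|^2 \ll (|V_j|/T) T^{o(1)}$; with such a bound, H\"older gives $\int_{V_j}|F|^2 \ll T^{-\delta/2+O(\varepsilon)}$, proving the proposition with $\delta_0 = \delta/2 - O(\varepsilon) > 0$ for small $\varepsilon$. I expect the improvement to come from a Hal\'asz--Montgomery-type argument applied on each dyadic slice $V_j^V = \{t \in V_j : |M_j(1+it)| \in (V, 2V]\}$: use Theorem~\ref{th:density->large} with $\nu = -\log V/\log M_j$ to bound $|V_j^V|$ sharply, combine with a mean value estimate on $\prod_{i \neq j} M_i$ restricted to the slice, and sum the resulting dyadic contributions.
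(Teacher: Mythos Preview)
Your H\"older/mean-value route creates the obstacle you identify, and it is not removable. The bound you hope for, $\int_{V_j}|M_i(1+it)|^{2p_i}\d t \ll (|V_j|/T)T^{o(1)}$ for $i\neq j$, cannot hold in general: the set $V_j$ is defined only in terms of $M_j$ and carries no information about $M_i$, so there is no mechanism for saving over the full mean value $\int_{-T}^T|M_i|^{2p_i}\ll T^{o(1)}$. Your last-paragraph refinement (slicing $|M_j|\sim V$ and combining $|V_j^V|\ll T^{(2-\delta)\sigma}$ with a mean-value bound on $\prod_{i\neq j}M_i$) also fails: the mean value of $|\prod_{i\neq j}M_i(1+it)|^2$ over $[-T,T]$ is $\asymp M_j T^{o(1)}$, giving $\int_{V_j^V}|F|^2\ll M_j^{1-2\sigma}T^{o(1)}$, which for $\sigma<1/2$ is a positive power of $T$; and the alternative $|V_j^V|\cdot V^2\cdot \sup|\prod_{i\neq j}M_i|^2$ only wins when $M_j>T^{1-\delta/2}$.

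The missing idea---and the paper's approach---is to dyadically decompose \emph{all} the $|M_j|$ simultaneously rather than one at a time. For a tuple $(\sigma_1,\dots,\sigma_k)$, on the cell
\[
\mathcal{T}=\{t\in[-T,T]:|M_j(1+it)|\in (M_j^{-\sigma_j},2M_j^{-\sigma_j}]\text{ for all }j\}
\]
one has the pointwise bound $|F(1+it)|^2\leq 4^k\prod_j M_j^{-2\sigma_j}\leq 4^k T^{-2\min_j\sigma_j+o(1)}$, since $\prod_j M_j=T^{1+o(1)}$ and each $\sigma_j\geq\min_j\sigma_j$. The hypothesis on $\mathcal{U}$ forces $\min_j\sigma_j\leq 1/2-20\varepsilon$ on every contributing cell, so it suffices to show $|\mathcal{T}|\ll T^{2\min_j\sigma_j-\delta_1}$; this is exactly what Theorem~\ref{th:density->large} (with $\nu=\min_j\sigma_j$) plus Conjecture~\ref{conj:StrongDH} gives, after a Weyl-type pointwise estimate rules out $\min_j\sigma_j$ being too close to $0$. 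No H\"older, no mean value theorem---the integrand is essentially constant on each cell, and the whole problem reduces to a measure estimate.
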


\begin{proof}[Proof of Proposition \ref{prop_sigma1/2}] 
For given $\sigma_j$ for $j = 1, \dotsc, k$, write 
\begin{align*}
\mathcal{T} = \{t\in [-T,T] \colon |M_j(1+it)|\in (M_j^{-\sigma_j},2M_j^{-\sigma_j}] \text{ for every $j \in \{1, \dotsc, k\}$}\}.
\end{align*}

It clearly suffices to show that, for some $\delta_1 = \delta_1(\varepsilon) > 0$,
\begin{align}
\label{eq:Tclaim}
|\mathcal{T}| \ll_\varepsilon T^{2\min_j \sigma_j - \delta_1}
\end{align}
for every choice of $\sigma_1, \dotsc, \sigma_k$.

Without loss of generality, we can assume that $\sigma_1=\min_j \sigma_j$. By assumption $\sigma_1 \leq 1/2-20\varepsilon$. On the other hand, by a pointwise estimate for zeta sums (see e.g.~\cite[Corollary 8.26]{iw-kow}) there exists an absolute constant $\beta \in (0, 1)$ such that
\[
|M_j(1+it)| \ll \exp(-\beta (\log M_j)^3/(\log T)^2)) \ll M_j^{-\beta \varepsilon^2}.
\]
Hence, taking $\varepsilon_0 = \beta \varepsilon^2/2$ we see that whenever $\sigma_1 < \varepsilon_0$ and $T$ is sufficiently large in terms of $\varepsilon$, the set $\mathcal{T}$ is empty. 

Hence we can assume that $\sigma_1 \in [\varepsilon_0, 1/2-20\varepsilon]$. Let $\delta(\varepsilon_0)$ be as in Conjecture~\ref{conj:StrongDH}. We apply first Theorem~\ref{th:density->large} with $\varepsilon = \varepsilon' := \varepsilon_0 \delta(\varepsilon_0)/6,$ and then Conjecture~\ref{conj:StrongDH}, obtaining
\begin{align*}
|\mathcal{T}| &\ll T^{\varepsilon'} \max_{1-\sigma_1-\varepsilon' \leq \alpha \leq 1} T^{\frac{\alpha-(1-\sigma_1)}{2}} N(\alpha, C \cdot T) + T^{\sigma_1/2 + \varepsilon'} \\
&\ll T^{\varepsilon'} \max_{1-\sigma_1-\varepsilon' \leq \alpha \leq 1} T^{\frac{\alpha-(1-\sigma_1)}{2}} T^{(2-\delta(\varepsilon_0))(1-\alpha)} + T^{\sigma_1/2 + \varepsilon'} \\
\end{align*}
The maximum is attained for $\alpha = 1-\sigma_1-\varepsilon'$. Recalling that $\sigma_1 \geq \varepsilon_0$, we obtain
\begin{align*}
|\mathcal{T}| &\ll T^{\varepsilon'} T^{-\frac{\varepsilon'}{2}} T^{(2-\delta(\varepsilon_0))(\sigma_1+\varepsilon')} + T^{\sigma_1/2 + \varepsilon'} \\
&\ll T^{2\sigma_1 + 3\varepsilon' - \delta(\varepsilon_0) \varepsilon_0} + T^{2\sigma_1 - \frac{3\varepsilon_0}{2} + \varepsilon'}.
\end{align*}
Recalling the definition of $\varepsilon'$, the claim~\eqref{eq:Tclaim} follows with $\delta_1 = \delta(\varepsilon_0) \varepsilon_0/2$. 
\end{proof}

We are now ready to sketch the proof of Theorem~\ref{thm_DH}: The proof follows similarly to the same theorem under the Lindel\"of hypothesis in~\cite[Section 6]{matomaki-teravainenE2} --- there the Lindel\"of hypothesis was only used to deduce that a condition similar to $|M_j(1+it)| \geq M_j^{-1/2+20\varepsilon}$ never holds. Assuming Conjecture~\ref{conj:StrongDH}, we can instead use Proposition~\ref{prop_sigma1/2} to deal with the contribution of the case when this does hold for some $j$.

\section{Proof of Proposition~\ref{prop:converse}}
\label{se:converse}
We now prove Proposition~\ref{prop:converse} by refining a standard zero-detecting polynomial.
\begin{proof}[Proof of Proposition~\ref{prop:converse}]
By dyadic splitting, it suffices to show that, for any $U \leq T/2$, we can partition
\[
\mathcal{U} \coloneqq \{\rho = \beta + i \gamma \colon \zeta(\rho) = 0, \beta \geq 1-\nu, U \leq |\gamma| \leq 2U\}
\] 
into two sets, $\mathcal{U} = \mathcal{U}_1 \cup \mathcal{U}_2$ in such a way that $\# \mathcal{U}_1 \ll T^{2\nu+3\varepsilon/2}$ and the zeroes in $\mathcal{U}_2$ satisfy the same condition as requested from the zeroes in $\mathcal{T}_2$ in the statement of the proposition. The case $U \leq T^\varepsilon$ follows trivially by simply choosing $\mathcal{U}_1 = \mathcal{U}$ and applying Lemma~\ref{le:zerosinboxes}. Similarly we can assume that $T$ is sufficiently large in terms of $\varepsilon$.

From now on we assume that $U \in [T^\varepsilon, T/2]$. We use the same standard zero-detecting polynomial as in~\cite[Section 10.2]{iw-kow} (with different length). We write $R \coloneqq T^{\varepsilon^2/2}$ and 
\[
R(s) := \sum_{r \leq R} \frac{\mu(r)}{r^s}.
\]
Let $\rho = \beta + i\gamma$ be a zeta zero with $\beta \geq 1-\nu$ and $|\gamma| \in [U, 2U]$. From Lemma~\ref{le:AFE}(ii) and the estimate $R(\beta + i\gamma) \ll R^{1-\beta} \log R$ we see that
\[
0 = \zeta(\beta + i\gamma) R(\beta + i\gamma) = \sum_{n \leq U R} \frac{a_n}{n^{\beta+i\gamma}} + O(U^{-\beta} R^{1-\beta} \log R),
\]
where
\[
a_n = \sum_{\substack{\ell r = n \\ \ell \leq U, \, r \leq R}} \mu(r).
\]
Now $a_1 = 1$ and $a_n = 0$ for $1 < n \leq R$. Hence, once $T$ is sufficiently large, 
\[
\left|\sum_{\substack{R < \ell r \leq UR \\ \ell \leq U, \, r \leq R}} \frac{\mu(r)}{(\ell r)^{\beta+i\gamma}}\right| \geq \frac{1}{2}.
\]
Splitting dyadically, we see that, for each zeta zero $\rho$, there exists $K_\rho = 2^{k_\rho} R \in [R, UR]$ such that
\begin{equation}
\label{eq:dyadlower}
\left|\sum_{\substack{K_\rho < \ell r \leq 2K_\rho \\ \ell \leq U, \, r \leq R}} \frac{\mu(r)}{(\ell r)^{\beta+i\gamma}}\right| \geq \frac{1}{4\log T}.
\end{equation}
We choose
\[
\mathcal{U}_1 \coloneqq \{\rho \in \mathcal{U} \colon K_\rho \leq R T^\varepsilon \text{ or } K_\rho \geq U T^{-\varepsilon}\} \quad \text{and} \quad \mathcal{U}_2 \coloneqq \mathcal{U} \setminus \mathcal{U}_1.  
\]
Let us first show that $\#\mathcal{U}_1 \ll T^{2\nu+3\varepsilon/2}$. By the pigeonhole principle, for some $K' \in [R, RT^\varepsilon] \cup [U T^{-\varepsilon}, UR]$ we have 
\[
\# \mathcal{U}_1 \leq 2 \log T \cdot \#\left\{\rho = \beta + i\gamma \in \mathcal{U}_1 \colon \left|\sum_{\substack{K' < \ell r \leq 2K' \\ \ell \leq U, \, r \leq R}} \frac{\mu(r)}{(\ell r)^{\beta+i\gamma}}\right| \geq \frac{1}{4\log T}\right\}.
\]
By Lemma~\ref{le:zerosinboxes} we can find a one-spaced set $\mathcal{V}_1 \subseteq \mathcal{U}_1$ such that $\#\mathcal{V}_1 \gg \#\mathcal{U}_1/\log^2 T$ and for each $\rho = \beta + i\gamma \in \mathcal{V}_1$, we have, with $\sigma = \beta-(1-\nu) \geq 0$, 
\[
\left|\sum_{\substack{K' < \ell r \leq 2K' \\ \ell \leq U, \, r \leq R}} \frac{\mu(r) (\ell r)^{-\sigma}}{(\ell r)^{1-\nu+i \gamma}}\right| \geq \frac{1}{4\log T}.
\]
By the discrete mean value theorem applied to the $2k$th moment of the Dirichlet polynomial on the left hand side (see e.g.~\cite[Proposition 9.11]{iw-kow}), we obtain that
\begin{equation}
\label{eq:powerDir}
\#\mathcal{V}_1 \ll_\varepsilon (U+K'^k) \sum_{K'^k < n \leq (2K')^k} \frac{d_{2k}(n)^2}{n^{2(1-\nu)}} (\log T)^{O_k(1)} \ll_k (U+K'^k) K'^{2\nu-1} (\log T)^{O_{\varepsilon,k}(1)}
\end{equation}
Choosing $k = \lfloor \frac{\log U}{\log K'}\rfloor$ if $K' \in [R, R T^\varepsilon]$ and $k=1$ if $K' \in [UT^{-\varepsilon}, UR]$, we obtain that $\#\mathcal{V}_1 \ll_{\varepsilon} U^{2\nu + 5\varepsilon/4}$, and thus $\#\mathcal{U}_1 \ll_{\varepsilon} U^{2\nu+3\varepsilon/2}$.

Consider then the set $\mathcal{U}_2 = \mathcal{U} \setminus \mathcal{U}_1$. By~\eqref{eq:dyadlower} and the triangle inequality, for each $\rho = \beta + i\gamma \in \mathcal{U}_2$, there exists $K_\rho \in (RT^\varepsilon, U T^{-\varepsilon}]$ such that
\[
\sum_{r \leq R} \frac{1}{r^\beta} \left|\sum_{\substack{K_\rho/r < \ell \leq 2K_\rho/r}} \frac{1}{\ell^{\beta+i\gamma}}\right| \geq \frac{1}{4\log T}.
\]
By the pigeonhole principle there exists $1\leq r_\rho \leq R$ such that
\begin{equation}
\label{eq:ellsumlow}
\left|\sum_{\substack{K_\rho/r_\rho < \ell \leq 2K_\rho/r_\rho}} \frac{1}{\ell^{\beta+i\gamma}}\right| \geq \frac{r_\rho^{\beta-1}}{8\log^2 T} \geq 1600 \cdot T^{-\varepsilon^2/2}
\end{equation}
once $T$ is sufficiently large in terms of $\varepsilon$.

We split into two cases according to the size of $K_\rho/r_\rho$.

\textbf{Case 1: $K_\rho/r_\rho \leq T^{1/2}/2$.} Now~\eqref{eq:ellsumlow} holds with some $K_\rho/r_\rho \in [T^\varepsilon, T^{1/2}/2]$, so by Lemma~\ref{le_partialsummation} there exists $y \in [T^\varepsilon, T^{1/2}/2]$ and $y' \in (y, 2y]$ such that
\[
\left|\sum_{\substack{y < \ell \leq y'}} \frac{1}{\ell^{1+i\gamma}}\right| \geq 200 \cdot T^{-\varepsilon^2/2} y^{\beta-1}. 
\]
Since $y \geq T^\varepsilon$, this is $\geq y^{-\nu-\varepsilon}$ and the claim follows. 

\textbf{Case 2: $K_\rho/r_\rho > T^{1/2}/2$.} Now~\eqref{eq:ellsumlow} holds with $K_\rho/r_\rho \in [T^{1/2}/2, UT^{-\varepsilon}]$. In particular $U \geq T^{1/2}$ in this case. Applying Lemma~\ref{le:AFE} with $x = 2K_\rho/r_\rho$ and $x = K_\rho/r_\rho$ implies that
\[
\sum_{\substack{\frac{K_\rho}{m_\rho} < \ell \leq \frac{2K_\rho}{m_\rho}}} \frac{1}{\ell^{\beta+i\gamma}} = \chi(\beta+i\gamma) \sum_{\frac{|\gamma|}{4\pi K_\rho/r_\rho} < n \leq \frac{|\gamma|}{2\pi K_\rho/r_\rho}} \frac{1}{n^{1-\beta-i\gamma}} + O\left(\frac{\log U}{(K_\rho/r_\rho)^{\beta}} + \frac{(K_\rho/r_\rho)^{1-\beta}}{U^{1/2}}\right).
\]
Hence by~\eqref{eq:ellsumlow} and~\eqref{eq:chibound} we can find $y \in [T^\varepsilon, U/(2T^{1/2})]$ such that
\[
\left| \sum_{y < n \leq 2y} \frac{1}{n^{1-\beta-i\gamma}} \right| \geq 8 \cdot U^{\beta-1/2} T^{-\varepsilon^2/2}.
\]
By Lemma~\ref{le_partialsummation} this implies that there exists $y \in [T^\varepsilon, U/(2T^{1/2})]$ and $y' \in (y, 2y]$ such that
\[
\left|\sum_{\substack{y < \ell \leq y'}} \frac{1}{\ell^{1+i\gamma}}\right| \geq \frac{ U^{\beta-1/2} T^{-\varepsilon^2/2}}{y^{\beta}}.
\]
Since $y \leq U$, we obtain a lower bound by applying the inequality $\beta \geq 1-\nu$, and so
\begin{equation}
\label{eq:case2fin}
\left|\sum_{\substack{y < \ell \leq y'}} \frac{1}{\ell^{1+i\gamma}}\right| \geq \frac{U^{1/2-\nu} T^{-\varepsilon^2/2}}{y^{1-\nu}} = y^{-\nu} T^{-\varepsilon^2/2} \left(\frac{U}{y^2}\right)^{1/2-\nu}.
\end{equation}
Using also that $y^2 \leq U^2/(4T) \leq U/4$ and $y \geq T^\varepsilon$, we see that the right hand side of~\eqref{eq:case2fin} is $\geq y^{-\nu-\varepsilon}$ and the claim follows.
\end{proof}

\section*{Acknowledgements} 
KM was partially supported by Academy of Finland grant no. 285894. JT was supported by Academy of Finland grant no. 340098 and by funding from the European Union's Horizon Europe research and innovation programme under Marie Sk\l{}odowska-Curie grant agreement no. 101058904.

Part of this material is based upon work supported by the Swedish Research Council under grant no. 2021-06594 while the authors were in residence at Institut Mittag-Leffler in Djursholm, Sweden, during Spring 2024.

\bibliography{refs}
\bibliographystyle{plain}

\end{document}